\journalname{Numerische Mathematik}
\begin{document}
\title{Time-stepping discontinuous Galerkin methods for fractional diffusion  problems}
\titlerunning{DGMs for fractional diffusion problems}
\author{Kassem Mustapha
\thanks{The valuable comments of the editor and the referees improved the paper. The material of the paragraph: {\em Motivation of the $hp$-DG and future work}, on page 3 is mainly   based on my discussions with Professor Michael Griebel especially during my visit to Bonn on May 2014.
 The support of  the Science Technology Unit at KFUPM through  King Abdulaziz City for Science and
Technology (KACST) under National Science, Technology and Innovation Plan (NSTIP) project No. 13-MAT1847-04
 is gratefully acknowledged.}} \institute{  Kassem Mustapha \at Department of Mathematics and Statistics, King Fahd University of Petroleum
and Minerals, Dhahran,
 Saudi Arabia.\\
\email{kassem@kfupm.edu.sa} }
\date{}
\maketitle
\begin{abstract}
 Time-stepping $hp$-versions discontinuous Galerkin (DG)  methods for the numerical solution of fractional
subdiffusion problems of order $-\alpha$ with $-1<\alpha<0$ will be  proposed and analyzed. Generic $hp$-version error estimates are derived after proving the stability of
the approximate solution. For $h$-version DG approximations on appropriate graded meshes near~$t=0$, we prove that the error is of
order~$O(k^{\max\{2,p\}+\frac{\alpha}{2}})$, where $k$ is the maximum time-step size and  $p\ge 1$ is the uniform degree of the DG solution. For
$hp$-version DG approximations,  by employing geometrically refined time-steps and linearly increasing approximation orders, exponential rates
of convergence in the number of temporal degrees of freedom are shown.
 Finally, some numerical tests are given.
 % {\color{red} and  it indicate that our theoretical $h$-version error bound is pessimistic.}
 \keywords{ Anomalous diffusion,  $hp$ methods, variable time steps,  error analysis}
%\subclass{%
%26A33, % fractional derivatives and integrals
%45J05, % integro-partial differential equations
%65M12, % stability and convergence of numerical methods
%65M15, % error bounds
%65M60} % finite elements, Raleigh-Ritz and variational methods
\end{abstract}
%%%%%%%%%%%%%%%%%%%%%%%%%%%%%%%%%%%%%%%%%%%%%%%%%%%%%%%%%%%%%%%%%%%%%%%%%%%
\newcommand{\N}{{\mathcal N}}
\newcommand{\Poly}{\mathbb{P}}
\newcommand{\E}{{\mathcal E}}
\renewcommand{\H}{\mathbb{H}}
\newcommand{\A}{{\mathcal A}}
\newcommand{\I}{{\mathcal I}}
\newcommand{\X}{{\mathbb X}}
\newcommand{\G}{{\mathcal G}}
\newcommand{\Hilb}{\mathbb{H}}
\newcommand{\iprod}[1]{\langle#1\rangle}
\newcommand{\W}{{\mathcal W}}
\newcommand{\V}{{\mathcal V}}
\newcommand{\B}{{\mathcal B}}
\newcommand{\D}{{\mathcal D}}
\newcommand{\R}{{\mathbb R}}
%%%%%%%%%%%%%%%%%%%%%%%%%%%%%%%%%%%%%%%%%%%%%%%%%%%%%%%%%%%%%%%%%%%%%
%\newtheorem{remark}[theorem]{Remark}
\newcounter{bean}
\newenvironment{romenum}{\begin{list}{{(\roman{bean})}}
{\usecounter{bean}}}{\end{list}}
%%%%%%%%%%%%%%%%%%%%%%%%%%%%%%%%%%%%%%%%%%%%%%%%%%%%%%%%%%%%%%%%%%%%%
%%%%%%%%%%%%%%%%%%%%%%%%%%%%%%%%%%%%%%%%%%%%%%%%%%%%%%%%%%%%%%%%%%%%%%%%%%%
\section{Introduction}
In this work, time-stepping discontinuous Galerkin  methods (DGMs) for fractional order diffusion equations of the form
\begin{equation}
\label{eq:ivp}
u' +\B_\alpha Au = f\qquad~ {\rm on}~~ \Omega\times (0,T]\quad{\rm with}~~
u|_{t=0}=u_0,
\end{equation}
subject to homogeneous Dirichlet boundary conditions are proposed and analyzed, where $u=u(x,t)$, $f=f(x,t)$, $u_0=u_0(x)$ ($(x,t) \in \Omega\times  (0,T]$),  $u'=\frac{\partial u}{\partial t}$, and
\begin{equation} \label{eq:riemann-liouville} \B_\alpha v(t)=\frac{\partial}{\partial t}\int_0^t\omega_{\alpha+1} (t-s)\, v
(s)\,ds~~{\rm with}~~ {\omega_{\alpha+1}}(t)=\frac{t^{\alpha}}{\Gamma(\alpha+1)}
\end{equation}
is the Riemann--Liouville time fractional derivative  operator of order~$0<-\alpha<1$.

In \eqref{eq:ivp}, the spatial domain $\Omega$ is assumed to be bounded and polyhedral, and for simplicity,
 we choose  $A u
=-{\rm div} (K_{\alpha+1} \nabla u)$
  with $\nabla$ being the spatial gradient of $u$ and $K_{\alpha+1}>0$ (positive constant) is the diffusivity.
Thus,   $A$ (subject to homogeneous Dirichlet boundary conditions) is strictly positive-definite and possesses a complete orthonormal
eigensystem~$\{\phi_m\}_{m=1}^\infty$ in $L_2(\Omega)$. We let $\lambda_m$ denote the eigenvalue corresponding to~$\phi_m$ (i.e.,
$A\phi_m=\lambda_m\phi_m$ with $\phi_m|_{\partial \Omega}=0$) where (without loss of generality) we assume for convenience that  $0<
\lambda_1\le\lambda_2\le\lambda_3\le\cdots$.

Problems of the form~\eqref{eq:ivp} arise in a variety of physical, biological and chemical applications~\cite{KilbasSrivastavaTrujillo2006,Mathai2011,SokolovKlafter2005,Tarasov2011}.  It describes  anomalous subdiffusion and occurs, for example, in models of fractured or
porous media, where the particle flux depends on the entire history of the density gradient~$\nabla u$.

A variety of {\em low-order} numerical methods for problems of the form \eqref{eq:ivp} (with Riemann--Liouville or Gr\"unwald--Letnikov
fractional derivatives) were studied by several authors. For explicit, implicit Euler and  compact finite difference (FD) schemes, see for
example
\cite{ChenLiuTurnerAnh2010,Cui2009,LanglandsHenry2005,LiuYangBurrage2009,Mustapha2011,YusteAcedo2005,YusteQuintana2009,ZhuangLiuAnhTurner2008,ZhuangLiuAnhTurner2009}.
For ADI FD schemes  on a rectangular spatial domain; refer to  \cite{WangWang2011,ZhangSun2011}. In addition, various numerical methods
\cite{Cui2012,Cui2013,GaoSun2011,JinLazarovZhou2012,MustaphaAbdallahFurati2014,Quintana-MurilloYuste2011,SweilamKhaderMahdy2012,ZhangSun2011} have also been applied for the
following  alternative representation of (\ref{eq:ivp}) (using the Caputo derivatives):
 %\begin{equation} \label{eq: caputo}
$I^{-\alpha} u'(t) - {\rm div} (K_{\alpha+1} \nabla u)(t) = \tilde f(t)$ % \end{equation}
where $I^{-\alpha}$ is the Riemann--Liouville time fractional integral operator;
\[  I^{-\alpha} v(t):=  \int_0^t\omega_{-\alpha}(t-s)v(s)\,ds\quad{\rm for}~~-1<\alpha<0\,.
\]
 The two representations are equivalent
under suitable assumptions on the initial data, but the  methods obtained for each representation are formally
different.

In earlier papers, McLean and I proposed and analyzed  different {\em low-order}  time stepping DG schemes for problem \eqref{eq:ivp}. In  \cite{McLeanMustapha2009}, a {\em piecewise-constant}
  DGM (generalized backward Euler) combined with
 finite elements (FEs)  for the spatial discretization was studied. Unconditional stability and optimal convergence rates in both time and space were proved. Using a different approach, we later studied \cite{MustaphaMcLean2012IMA} the error analysis of the {\em piecewise-linear} DGM. Suboptimal rates of convergence had been achieved, however, the numerical results illustrated optimal rates.  In continuation, by duality arguments, nodal superconvergence results were proved in  \cite{MustaphaMcLean2012SINUM}. Moreover, we extracted the superconvergence at the nodal points of the DG solution globally by  post-processing the  DG solution  through
 Lagrange interpolations.   In all these papers, variable time steps were employed to compensate the lack of regularity of the solution $u$ of problem \eqref{eq:ivp} near $t=0$.

The main purpose of this paper is to study the stability and the accuracy of  {\em high-order}  time-stepping $h$-version DG ($h$-DG) and $hp$-version ($hp$-DG)  methods. This task is not trivial since the DGM allows us to only control the jumps of the approximate solutions which is enough for the {\em low-order} DGMs in \cite{McLeanMustapha2009,MustaphaMcLean2011,MustaphaMcLean2012IMA,MustaphaMcLean2012SINUM}. A new analysis based on the  coercivity and continuity properties of the operator $\B_\alpha$, also based on some fractional derivative-integral  identities is required. These will be the keys to establish the stability and consequently  deriving  promising error estimates (in the $L_\infty(0,T)$-norm)  over families of nonuniform  meshes.
 %It is worthy  to mention that the proposed  approach
 % is beneficial in studying the  time-stepping DG schemes for various classes of fractional diffusion problems including problem \eqref{eq:
 % caputo} in addition to the fractional Fokker-Planck equation.

  In contrast, for $0<\alpha<1,$ the model problem \eqref{eq:ivp} amounts to the fractional wave equation (super-diffusion):
  \begin{equation}
\label{eq: fractional wave}
u'(t) +I^{\alpha} Au (t)  = f(t)\quad{\rm with}~~
u|_{t=0}=u_0\,.
\end{equation}
Recently, Sch{\"o}tzau and I investigated
\cite{MustaphaSchoetzau2013}     $h$-DG and  $hp$-DG methods (in time)
  for problem \eqref{eq: fractional wave}. Although we could not show the stability of our scheme  because of some technical difficulties, algebraic and exponential convergence rates (in a non-standard norm that can be weaker than the $L_2$-norm in some cases) for $h$-DG and $hp$-DG schemes were achieved, respectively.

  Due to the different nature and properties of the  operators $I^\alpha$ and  $\B_{\alpha}$, another technique will be used in this work  to show the stability of our scheme and also to derive generic $hp$-version abstract error estimates in the  stronger $L_\infty(0,T)$-norm.
%  , which are completely explicit in the local step sizes, the local polynomial degrees and the local regularity of the analytical solution.
Then proceeding along the lines of  \cite{MustaphaBrunnerMustaphaSchoetzau2011,MustaphaSchoetzau2013,SchoetzauSchwab00}  and investigating two refinement strategies in the case where the solution $u$ of \eqref{eq:ivp} lacks regularity as $t=0$. Noting that, in \cite{MustaphaBrunnerMustaphaSchoetzau2011,SchoetzauSchwab00},  $hp$-DGMs   for parabolic and parabolic integro-differential equations were considered   where the stability and error analyses  follow relatively straight forwardly from the different natures of the equations.

 In the $L_\infty(0,T)$-norm,  exponential convergence rates (in the number of temporal degrees of freedom) for the  $hp$-version DG ($hp$-DG) scheme based on geometrically refined time-steps and on linearly increasing approximation orders will be achieved. Moreover, for the $h$-DGM of piecewise uniform  degree $p\ge 1$, we prove $O(k^{\max\{2,p+\frac{1}{2}\}+\frac{\alpha}{2}})$  algebraic convergence rates over non-uniform graded meshes that concentrate the time levels near~$t=0$.  So, the convergence rates is short $\frac{1-\alpha}{2}$ power from being optimal for $p \ge 2$, however, just short by $-\frac{\alpha}{2}$ power  for $p=1$ which is due to the fact that  the approximate solution can be controlled from the jumps in this case. Indeed, the numerical experiments illustrate optimal error rates of order~$O(k^{p+1})$ for some choices of $\alpha$ and $p$.  In our test we combine the proposed $hp$-version time-stepping method with a standard (continuous) FEs in space which will then define a fully discrete scheme. We choose the spatial step size and the order of the spatial FEs so that the temporal errors are dominating. Analyzing the convergence of the fully discrete  scheme will be considered in future work.

{\em Motivation of the $hp$-DG and future work.} The nonlocal nature of $\B_\alpha$ means that on each time subinterval, one must efficiently evaluate a sum of integrals over all previous time subintervals. For example, a direct implementation of the time-stepping $h$-DG method (of uniform degree $p$) combined with the FE discretization in space requires $\mathcal{O}((p+1)\,N^2\,M)$ operations and requires $\mathcal{O}((p+1)\,N\,M)$ storage ($N$ is the number of time-mesh elements and $M$ is the spatial degrees of freedom). Thus, reducing the number of time-steps and at the same time maintaining  high accuracy is important especially when $\Omega \subset\R^3$, then the time-space problem \eqref{eq:ivp} is four-dimensional and thus beyond the computing power of conventional machines.  For analytic solutions in the time variable, $hp$-DGMs with exponential rates of convergence allow us to achieve these requirements  to a large extent. For instance, if the error from the spatial FE is of order $\mathcal{O}(h^r)$ for some $r\ge 2,$ then we can balance the exponential rates in time with the algebraic one in space. In this case, the number of operations will be reduced to $\mathcal{O}((p+1)\,M\,(\log M)^{\nu_2})$ and the active operations to  $\mathcal{O}((p+1)\,M(\log M)^{\nu_2})$ where $\nu_1$ and $\nu_2$ depend on $r$. However, if the solution $u$ of \eqref{eq:ivp} is not analytic (in time) but satisfies appropriate  regularity assumptions,  time-space sparse grids can be used to get similar  results. Furthermore, if in addition $u$ satisfies certain mixed spatial regularity properties,  the computational cost can be  further reduced. These
computing issues are subject to ongoing  investigation and hence, will be considered in future work.

The outline of the paper is as follows. The  DGM will be introduced in the next section and the stability of the semi-discrete solutions
will be proved in Section~\ref{sec:existence-uniqueness}. Followed by deriving abstract error bounds of the time-stepping DGM  in
Section~\ref{sec:error}. Sections~\ref{sec:algebraic} and \ref{sec:exponential}  are devoted to establishing algebraic rate of convergence of
the $h$-DGM and exponential rates of convergence for the $hp$-DGM, respectively. Numerical illustrations of our results will be presented in
Section~\ref{sec:numerics}.
%%%%%%%%%%%%%%%%%%%%%%%%%%%%%%%%%%%%%%%%%%%%%%%%%%%%%%%%%%%%%%%%%%%%%
%\setcounter{equation}{0} \setcounter{theorem}{0}
\section{Discontinuous Galerkin discretization}
\label{sec:numerical-method}
%%%%%%%%%%%%%%%%%%%%%%%%%%%%%%%%%%%%%%%%%%%%%%%%%%%%%%%%%%%%%%%%%%%%%
To define the time-stepping DGM for problem \eqref{eq:ivp}, we introduce a  partition~${\cal M}$ of the interval
$[0,T]$ given by the points:
%\begin{equation}\label{eq: tn mesh}
$0=t_0<t_1<\cdots<t_N=T.$
%\end{equation}
We set $I_n=(t_{n-1},t_n)$ and $k_n=t_n-t_{n-1}$ for $1\le n\le N$.
%The maximum time-step size is defined as $k=\max_{1\le n\le N} k_n$.
With each
subinterval $I_n$ we associate a polynomial degree $p_n\in{\mathbb N}_0$. These degrees are then stored in the degree vector ${\bf
p}:=(p_1,p_2,\cdots,p_N).$ Next, we introduce the discontinuous finite element space
\begin{equation}
\label{eq:FE-space} \W({\cal M}, {\bf p})=\left\{\,v:[0,T]\to H^1_0(\Omega)\,:\, v|_{I_n}\in\Poly_{p_n},\ 1\le n\le N\right\},
\end{equation}
where $\Poly_{p_n}$ denotes the space of polynomials of degree $\le p_n$ with coefficients in~$H^1_0(\Omega)$. For a function~$v\in\W({\cal
M},{\bf p})$, we write $ v^n_-=v(t_n^-)$, $v_+^n=v(t_n^+)$ and $[v]^n=v^n_+-v^n_-$ with $ v^N_+=v^N_-$ and  $v_-^0=v^0\,.$

The time-stepping DG approximation $U\in \W({\cal M},{\bf p})$ is now defined as follows: Given $U(t)$ for $ t \in \cup_{j=1}^{n-1} I_{j-1}$, the discrete
solution $U\in \Poly_{p_n}$ on the next time subinterval ~$I_n$ is determined by requesting that
\begin{equation}
\label{eq: DG step}  \iprod{U^{n-1}_+-U^{n-1}_-,X^{n-1}_+}
    +\int_{t_{n-1}}^{t_n}\,\Big(\iprod{U',X}+A(\B_\alpha U,X)\,\Big)\,dt
=\int_{t_{n-1}}^{t_n}\iprod{f,X}\,dt
\end{equation}
$\forall ~X\in\Poly_{p_n}$ with ~$U^0_-\approx u_0$. Here   $A(\cdot,\cdot): H^1_0(\Omega)\times H^1_0(\Omega) \to\mathbb{R}$  is bilinear operator
associated with the differential operator~$A:=-{\rm div} (K_{\alpha+1} \nabla)$ { and is }given by
\begin{equation*}%\label{eq: blinear orthogonal}
A(v,w):=\iprod{\nabla v,\nabla w}=\sum_{m=1}^\infty\lambda_m\, u_m\,v_m \quad\text{where $u_m=\iprod{u,\phi_m}$ and $v_m=\iprod{v,\phi_m}$}\,.
\end{equation*}
Throughout the paper, by $\iprod{\cdot,\cdot}$  and $\|\cdot\|$, we denote the inner product and the associated norm  in the space
$L_2(\Omega)$. Moreover, $\|\cdot\|_1$ denotes the norm on the Sobolev space $H^1(\Omega)$ and for $j\ge 1$, $u^{(j)}:=\frac{\partial^j u}{\partial
t^j}$.

As in \cite{MustaphaSchoetzau2013}, since the  operator $A$ possesses a complete orthonormal eigensystem~$\{\lambda_m, \phi_m\}_{m\geq 1}$, the DG scheme ~\eqref{eq: DG step}
can be reduced to a finite linear system of algebraic equations on each subinterval $I_n$. To see this, let~$P_{p_n}$ be the scalar polynomial
space of degree $\le p_n$. Now, take  $X=\phi_m w$ in~\eqref{eq: DG step}, we find that: for $m\ge 1,$

\begin{equation}
\label{eq:DG-step-algebraic}  U_{m,+}^{n-1}\; w^{n-1}_+
    +\int_{t_{n-1}}^{t_n}\,
    \Bigl({U'_m}w+\lambda_m \B_\alpha U_m\; w  \,\Bigr)\,dt
\\=U^{n-1}_{m,-}\;w^{n-1}_++\int_{t_{n-1}}^{t_n}f_m\,w\,dt
\end{equation}
$\forall~ w \in P_{p_n}$ and for $1\le n\le N,$ where $U_m=\iprod{U,\phi_m}\in P_{p_n}$ and $f_m=\iprod{f,\phi_m}$.

Very briefly, because of the finite dimensionality of system~\eqref{eq:DG-step-algebraic}, the existence of the scalar
function $U_m$ on $I_n$  follows from its uniqueness. For uniqueness, it is enough to show that $U_m \equiv 0$ on $I_n$ for $n\ge 1$ when the
right-hand side of \eqref{eq:DG-step-algebraic} is identically zero. This follows from the stability theorem (Theorem \ref{theorem: stability})
and the coercivity property (i) in Lemma \ref{lemma: lower bound of the positivity}.

%%%%%%%%%%%%%%%%%%%%%%%%%%%%%%%%%%%%%%%%%%%%%%
%\setcounter{equation}{0} \setcounter{theorem}{0}
\section{Stability of DG solutions}
\label{sec:existence-uniqueness}
%%%%%%%%%%%%%%%%%%%%%%%%%%%%%%%%%%%%%%%%%%%%%
%%%%%%%%%%%%%%%%%%%%%%%%%%%%%%%%%%%%%%%%%%%%%%
In this section, we show  the stability of the semi-discrete solutions.  For convenience, we introduce the following notation. Set $J:=\bigcup_{j=1}^N
I_j$ and  we let ${\mathcal C}^1(J)$ denote the space of functions~$v:J\to \mathbb{R}$ such that the restriction~$v|_{I_j}$ extends to a continuously
differentiable function on the closed interval~$[t_{j-1},t_j]$, for~$1\le j\le N$.

 In the following result we gather  two key
properties of the fractional time derivative operator $\B_\alpha$  that we use in our analysis.
 \begin{lemma} \label{lemma: lower bound of the positivity} Let $c_\alpha =\frac{ \cos(
\alpha\pi/2)}{\pi^\alpha}\frac{|\alpha|^{-\alpha}}{(1-\alpha)^{1-\alpha}}$ and $ d_\alpha=\frac{1}{\cos(\alpha \pi/2)}$ for  any $-1<\alpha<0$. Then,
 for any $v,\,w$ in  ${\mathcal C}^1(J)$ (or in $W^{1,1}(0,T)$), we have
\begin{itemize}
\item[{\rm (i)}]
$\int_0^T \B_\alpha v(t)\,v(t)\,dt \ge c_\alpha T^\alpha \int_0^T v^2(t)\,dt,$
\item[{\rm (ii)}]
$\Big| \int_0^T \B_\alpha v(t)\,w(t)\,dt\Big|^2
    \le {d_\alpha}^2 \int_0^T \B_\alpha v(t)\,v(t)\,dt\int_0^T \B_\alpha w(t)\,w(t)\,dt\,.$
\end{itemize}
\end{lemma}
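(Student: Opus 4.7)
The plan is to recast both inequalities as statements about a Fourier multiplier, using the positivity of the real part of the symbol of~$\B_\alpha$ to furnish the coercivity. I extend $v$ and $w$ by zero to all of~$\R$ (they vanish for $t<0$ by construction, and both integrals only see $[0,T]$ because $w$ and~$v$ vanish outside this interval). With the convention $\hat v(\xi)=\int_\R v(t)e^{-i\xi t}\,dt$, the kernel $\omega_{\alpha+1}$ has Fourier transform $(i\xi)^{-\alpha-1}$ (principal branch), so $\B_\alpha=\partial_t(\omega_{\alpha+1}\ast\,\cdot\,)$ corresponds to the symbol $(i\xi)^{-\alpha}$. The decisive observation is $\mathrm{Re}\,(i\xi)^{-\alpha}=\cos(\alpha\pi/2)\,|\xi|^{-\alpha}\ge 0$, which holds precisely because $-1<\alpha<0$ forces $|\alpha|\pi/2<\pi/2$. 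By Parseval,
\[ \int_0^T\B_\alpha v\cdot w\,dt=\frac{1}{2\pi}\int_\R (i\xi)^{-\alpha}\hat v(\xi)\overline{\hat w(\xi)}\,d\xi, \]
and, with $v$ real, the imaginary contributions cancel in the diagonal case, yielding $\int_0^T\B_\alpha v\cdot v\,dt=(2\pi)^{-1}\cos(\alpha\pi/2)\int_\R |\xi|^{-\alpha}|\hat v|^2\,d\xi$.

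For~(ii) I would apply the Cauchy--Schwarz inequality to the spectral integral using $|(i\xi)^{-\alpha}|=|\xi|^{-\alpha}$:
\[ \Bigl|\int_0^T\B_\alpha v\cdot w\,dt\Bigr|^2\le\frac{1}{(2\pi)^2}\int_\R|\xi|^{-\alpha}|\hat v|^2\,d\xi\int_\R|\xi|^{-\alpha}|\hat w|^2\,d\xi. \]
Substituting the diagonal identity above to rewrite each factor on the right in terms of $\int_0^T\B_\alpha v\cdot v\,dt$ and $\int_0^T\B_\alpha w\cdot w\,dt$ produces exactly $\cos^{-2}(\alpha\pi/2)=d_\alpha^2$ times the product of the coercive forms, giving~(ii).

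For~(i) the weight $|\xi|^{-\alpha}$ vanishes at $\xi=0$, so no pointwise lower bound by a positive constant is available, and one must exploit $\mathrm{supp}(\tilde v)\subset[0,T]$. I split the spectrum at a cutoff $R>0$: for low frequencies the elementary Cauchy--Schwarz bound $|\hat v(\xi)|^2\le \|v\|_{L^1(0,T)}^2\le T\int_0^T v^2$ gives $\int_{|\xi|<R}|\hat v|^2\,d\xi\le 2RT\int_0^T v^2$; combined with Plancherel $\int_\R|\hat v|^2\,d\xi=2\pi\int_0^T v^2$ this yields $\int_{|\xi|\ge R}|\hat v|^2\,d\xi\ge 2(\pi-RT)\int_0^T v^2$ whenever $R<\pi/T$. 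Using $|\xi|^{-\alpha}\ge R^{-\alpha}$ on $|\xi|\ge R$ leads to
\[\int_\R |\xi|^{-\alpha}|\hat v|^2\,d\xi\;\ge\;2R^{-\alpha}(\pi-RT)\int_0^T v^2\,dt.\]
An elementary calculus maximization in $R$ yields the critical value $R^{\star}=|\alpha|\pi/((1-\alpha)T)$; inserting $R^{\star}$ and multiplying by $(2\pi)^{-1}\cos(\alpha\pi/2)$ collapses the bound to exactly $c_\alpha T^\alpha\int_0^T v^2\,dt$, which is~(i).

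The main obstacle is the sharp-constant bookkeeping in~(i): one must choose the right Fourier normalization and carefully track the factors of $\pi$, $\cos(\alpha\pi/2)$, $|\alpha|$ and $(1-\alpha)$ through the optimization, since $c_\alpha$ is the exact outcome rather than a generic positive constant. A minor technical point is that, for $v\in\mathcal{C}^1(J)$ with jumps at the nodes $t_n$, the zero extension $\tilde v$ merely lies in $L^2(\R)$; the Parseval identities above then hold by approximating $v$ from $C_c^\infty(0,T)$ and passing to the limit using the continuity of both sides of each identity in the $L^2$-topology.
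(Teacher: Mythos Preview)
Your proof is correct and follows essentially the same route as the paper's cited references: the paper does not give a self-contained argument but points to McLean~(2012, Theorem~A.1) and Mustapha--Sch\"otzau~(2013, Lemma~3.1), both of which establish the inequalities via the Laplace/Fourier transform and Plancherel, exactly as you do. Your frequency-splitting optimization for~(i) reproduces the sharp constant~$c_\alpha$, and the weighted Cauchy--Schwarz step for~(ii) is the standard derivation of the continuity constant~$d_\alpha$.
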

\begin{proof}
The coercivity property (i) was proven in \cite[Theorem A.1]{McLean2012} by using the Laplace transform and the Plancherel Theorem. In a similar
fashion, property (ii) can be obtained, see for example \cite[Lemma 3.1]{MustaphaSchoetzau2013}.$\quad \Box$
\end{proof}
\begin{remark} Noting that, as $\alpha$ approaches  $0$, we recover the classical coercivity and continuity properties. In addition, as was mentioned earlier, for $0<\alpha<1,$ $\B_\alpha=I^\alpha$. In this case, the above coercivity property is no longer valid. We have  a weaker version instead, see \cite[Lemma 3.1 (i)]{MustaphaSchoetzau2013}.
%Due to this fact, showing error bounds in the $L_\infty(0,T)$ was not possible in \cite{MustaphaSchoetzau2013} which is not the case here.
$ \quad \Box$
\end{remark}
The stability of the DG solution $U$ will be shown in the next theorem.  The proof below looks straightforward due to the new approach that has not been used before. The key ingredients are the above lemma and the appropriate use of the identity:   $ \B_\alpha  I^{-\alpha}=I.$ Indeed, the current approach can be adopted to show the stability of $U$ when $0<\alpha<1$ as this was not proven  in \cite{MustaphaSchoetzau2013}.    It is worth mentioning that the stability result below plays a crucial role in our forthcoming error analysis, see Theorem \ref{lem: bound of theta}.   Noting that, the proofs of the stability in \cite{McLeanMustapha2009,MustaphaMcLean2011}  are valid only for $h$-DGMs of order $p\in \{0,1\}$ ({\em low-order}).
\begin{theorem}\label{theorem: stability}
For $1\leq n \leq N$, the DG solution $U$ of~\eqref{eq: DG step} satisfies
\[
\|U_-^n\|^2+\|U^{n-1}_+\|^2+2\,\int_0^{t_n}\, A(\B_\alpha U,U) \,dt
   \leq  4\,\|U_-^0\|^2
   +4\, d_\alpha^2 \int_0^{t_n} |\iprod{g,  A^{-1} f }|\,dt
\]
 where %$d_\alpha$ is the constant in Lemma~\ref{lemma: lower bound of the positivity},
  $A^{-1}$ is the inverse of the
 positive-definite operator $A$, and $g(t):=(I^{-\alpha} f)(t)\,.$
\end{theorem}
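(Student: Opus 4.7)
The plan is to test the DG step with $X=U$ on each subinterval $I_n$ and sum from $n=1$ to an arbitrary index $m\le N$. Using the polarization identity $\iprod{[U]^{n-1},U_+^{n-1}}=\tfrac12(\|U_+^{n-1}\|^2+\|[U]^{n-1}\|^2-\|U_-^{n-1}\|^2)$ together with $\int_{I_n}\iprod{U',U}\,dt=\tfrac12(\|U_-^n\|^2-\|U_+^{n-1}\|^2)$ telescopes the jump and transport contributions and yields the standard DG energy identity
\begin{equation*}
\frac12\|U_-^m\|^2+\frac12\sum_{n=1}^m\|[U]^{n-1}\|^2+\int_0^{t_m}A(\B_\alpha U,U)\,dt=\frac12\|U_-^0\|^2+\int_0^{t_m}\iprod{f,U}\,dt.
\end{equation*}
The whole problem then reduces to estimating the last term against the two quantities appearing on the right-hand side of the theorem.

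To do so, I exploit the operator identity $\B_\alpha I^{-\alpha}=\mathrm{id}$, which lets me rewrite $f=\B_\alpha g$ with $g=I^{-\alpha}f$, and expand both $U$ and $g$ in the eigenbasis $\{\phi_k\}$ of $A$. Splitting the spectral weights symmetrically gives
\begin{equation*}
\int_0^{t_m}\iprod{f,U}\,dt=\sum_{k\ge 1}\int_0^{t_m}\B_\alpha(\lambda_k^{-1/2}g_k)\cdot(\lambda_k^{1/2}U_k)\,dt.
\end{equation*}
Applying Lemma~\ref{lemma: lower bound of the positivity}(ii) term-by-term then produces, in the $v$-factor, $\lambda_k^{-1}\int_0^{t_m}(\B_\alpha g_k)g_k\,dt$, and in the $w$-factor, $\lambda_k\int_0^{t_m}(\B_\alpha U_k)U_k\,dt$. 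Using $\B_\alpha g_k=f_k$, summing on $k$, and applying Cauchy--Schwarz to the series, I recognise $\sum_k\lambda_k^{-1}g_k f_k=\iprod{g,A^{-1}f}$ and $\sum_k\lambda_k(\B_\alpha U_k)U_k=A(\B_\alpha U,U)$, which delivers
\begin{equation*}
\Big|\int_0^{t_m}\iprod{f,U}\,dt\Big|\le d_\alpha\Big(\int_0^{t_m}|\iprod{g,A^{-1}f}|\,dt\Big)^{1/2}\Big(\int_0^{t_m}A(\B_\alpha U,U)\,dt\Big)^{1/2}.
\end{equation*}

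A Young's inequality on this bound absorbs half of $\int_0^{t_m}A(\B_\alpha U,U)\,dt$ into the left of the energy identity, and after multiplying by~$2$ yields the intermediate estimate
\begin{equation*}
\|U_-^m\|^2+\sum_{n=1}^m\|[U]^{n-1}\|^2+\int_0^{t_m}A(\B_\alpha U,U)\,dt\le \|U_-^0\|^2+d_\alpha^2\int_0^{t_m}|\iprod{g,A^{-1}f}|\,dt.
\end{equation*}
Specialising this to the indices $n$ and $n-1$, combining with $\|U_+^{n-1}\|^2\le 2\|U_-^{n-1}\|^2+2\|[U]^{n-1}\|^2$, and using that the right-hand integral is monotone in its upper limit (its integrand is non-negative modewise by Lemma~\ref{lemma: lower bound of the positivity}(i)), recovers the asserted bound with the universal constants $4$ and $4d_\alpha^2$.

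The main obstacle is the spectral step: choosing the $\lambda_k^{1/2}$-splitting \emph{before} invoking Lemma~\ref{lemma: lower bound of the positivity}(ii) so that summing over $k$ reassembles into exactly $\iprod{g,A^{-1}f}$ on one side and $A(\B_\alpha U,U)$ on the other. A naive application of (ii) (without the rescaling) would produce $\sum_k\int(\B_\alpha g_k)g_k\,dt$ and $\sum_k\int(\B_\alpha U_k)U_k\,dt$, whose $\lambda_k$ weights do not match the $A^{-1}$ pairing on the right nor the coercive $A$-weighted term that the analysis must absorb on the left; hence the bound could not be closed. It is the identity $\B_\alpha I^{-\alpha}=I$, coupled with this spectral calibration, that allows the data-side quantity to appear in the natural $A^{-1}$-weighted norm suggested by the energy method.
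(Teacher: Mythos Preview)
Your argument is correct and follows essentially the same route as the paper: test with $X=U$, telescope, write $f=\B_\alpha g$, apply Lemma~\ref{lemma: lower bound of the positivity}(ii) modewise with the $\lambda_m^{\pm 1}$ weighting so that the data term becomes $\iprod{g,A^{-1}f}$ and the absorbable term is $A(\B_\alpha U,U)$, then combine the cases $m=n-1$ and $m=n$ with $\|U_+^{n-1}\|^2\le 2\|U_-^{n-1}\|^2+2\|[U]^{n-1}\|^2$. The only cosmetic difference is order: the paper adds the two levels first and then estimates the source, whereas you estimate the source first to obtain the sharper intermediate bound and only then combine levels; the ingredients and the resulting constants are the same.
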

\begin{proof}
Choosing $X=U$ in~\eqref{eq: DG step} and  using  $\iprod{U'(t),U(t)}=\frac{1}{2}\frac{d}{dt}\|U(t)\|^2$, we obtain
\[
\|U_-^j\|^2+\|U^{j-1}_+\|^2-2\iprod{U_-^{j-1},U^{j-1}_+}
    +2\int_{t_{j-1}}^{t_j}\, A(\B_\alpha U,U) \,dt
    =2\int_{t_{j-1}}^{t_j}\iprod{ f,U}\,dt.
\]
Summing over $j=1,\cdots,\ell$, and using  $f=\B_\alpha g,$
\begin{multline*}
 \sum_{j=1}^\ell\bigl(\|U_-^j\|^2+\|U^{j-1}_+\|^2-2\iprod{U_-^{j-1},U^{j-1}_+}\bigr)+2\int_0^{t_\ell}\, A(\B_\alpha U,U) \,dt\\
  =
    2\, \int_0^{t_\ell} \iprod{\B_\alpha  g,U}\,dt.
\end{multline*}
Choose $\ell=n-1$ and $\ell=n$ respectively, summing and then using  the identity;
\begin{multline*}\sum_{j=1}^\ell\bigl(\|U_-^j\|^2+\|U^{j-1}_+\|^2-2\iprod{U_-^{j-1},U^{j-1}_+}\bigr)\\
=\|U_-^\ell\|^2+\|U^0_+\|^2+\sum_{j=1}^{\ell-1}\|[U]^j\|^2-2\iprod{U_-^0,U^0_+}\end{multline*} yield
 \begin{multline*}
\|U_-^{n-1}\|^2+  \|U_-^n\|^2+2\|U^0_+\|^2+\|[U]^{n-1}\|^2+2\sum_{j=n-1}^n \int_0^{t_j}\, A(\B_\alpha U,U) \,dt\\
   \le  4\iprod{U_-^0,U^0_+}
    +2\sum_{j=n-1}^n  \int_0^{t_j}\iprod{\B_\alpha  g,U}\,dt.
\end{multline*}
Since $4|\iprod{U_-^0,U^0_+}|\le 2\|U_-^0\|^2+2\|U^0_+\|^2$ and  $\|U^{n-1}_+\|^2  \le 2\|U_-^{n-1}\|^2+2\|[U]^{n-1}\|^2$,  \begin{multline}
 \label{eq: U^2 bound}
\|U_-^n\|^2+\|U^{n-1}_+\|^2+4\sum_{j=n-1}^n\int_0^{t_j}\, A(\B_\alpha U,U) \,dt\\
   \leq  4\,\|U_-^0\|^2
   +4\sum_{j=n-1}^n  \int_0^{t_j} \iprod{ \B_\alpha  g,U}\,dt,
\end{multline}
for $1\le n\le N$. Now,  setting $U_m=\iprod{U,\phi_m}$ and $g_m=\iprod{g,\phi_m}$, and hence,    the continuity property (ii) in Lemma~\ref{lemma: lower bound of the positivity} implies that; for $1\le j\le N$,
\begin{align*}
4\, \int_0^{t_j}\iprod{\B_\alpha  g,U}\,dt&=4\sum_{m=1}^\infty\,\int_0^{t_j} \B_\alpha  g_m\,U_m\,dt\\
&\le 4\,d_\alpha \sum_{m=1}^\infty\,\left(\int_0^{t_j} \B_\alpha  g_m\,g_m\,dt\right)^{1/2}\left(\int_0^{t_j} \B_\alpha  U_m\,U_m\,dt\right)^{1/2}\,dt\\
&\le 2\, d_\alpha^2 \sum_{m=1}^\infty\,\int_0^{t_j} \B_\alpha  g_m\,\lambda_m^{-1} g_m\,dt+2\sum_{m=1}^\infty\lambda_m \int_0^{t_j} \B_\alpha
U_m\,U_m\,dt
\\
&= 2\, d_\alpha^2 \int_0^{t_j} \iprod{\B_\alpha g , A^{-1} g}\,dt+2 \, \int_0^{t_j} A(\B_\alpha U , U)\,dt\,.
 \end{align*}
%\[
%|U|_{\frac{\alpha}{2},n}^2
%    %=\int_0^{t_n}\,A(\B_\alpha U,U)\,dt
%    =\sum_{m=1}^\infty\lambda_m \int_0^{t_n}\,\B_\alpha U_m\,U_m\,dt
%    \ge c_\alpha t_n^\alpha\;  \sum_{m=1}^\infty\lambda_m\int_0^{t_n}\;|U_m|^2\,dt=c_\alpha t_n^\alpha \; \int_0^{t_n}\|U\|_1^2\,dt
%\]
Therefore,  the desired stability estimate follows after inserting the above bound (for $j=n-1$ and $j=n$)
 on the right-hand side of \eqref{eq: U^2 bound}\,.
 % is \begin{multline*}
%\le 8\biggl(\|U_-^0\|\|U^0_+\|+\int_0^{t_n}\|f\|\;\|U\|\,dt\biggr) \\
%\le 4\|U_-^0\|^2+4\|U^0_+\|^2+\frac{16}{3c_\alpha t_n^\alpha } \int_0^{t_n}\|f \|^2\,dt+3\,c_\alpha t_n^\alpha \int_0^{t_n}\|U\|_1^2\,dt.
%\end{multline*}
This finishes the proof.$\quad \Box$
\end{proof}

%%%%%%%%%%%%%%%%%%%%%%%%%%%%%%%%%%%%%%%%%%%%%%%%%%%%%%%%%%%%
%\setcounter{equation}{0} \setcounter{theorem}{0}
\section{Error analysis}
\label{sec:error}
%%%%%%%%%%%%%%%%%%%%%%%%%%%%%%%%%%%%%%%%%%%%%
This section is devoted to deriving abstract error estimates for the DGM. A global formulation of our numerical scheme will be given first. More
precisely, it will be convenient to reformulate the DG scheme (\ref{eq: DG step}) in terms of the  bilinear form
\begin{equation}\label{eq: GN def}
G_N(U,X)=\iprod{U^0_+,X^0_+}+\sum_{n=1}^{N-1}\iprod{[U]^n,X^n_+}
    +\int_0^{t_N}\,\Bigl(\iprod{U',X}+A(\B_\alpha U,X)\Bigr)\,dt.
\end{equation}
Integration by parts yields an alternative  expression for the bilinear form $G_N$:
\begin{equation}
\label{rem:GN-alt}G_N(U,X)=\iprod{U^N_-,X^N_-}-\sum_{n=1}^{N-1}\iprod{U^n_-,[X]^n} +\int_0^{t_N}\,\Bigl(-\iprod{U,X'}
    + A(\B_\alpha U,X)\Bigr)\,dt.
\end{equation}
 By summing up (\ref{eq: DG step}) over all time-steps, the DGM can now equivalently be written as: Find $U\in \W({\cal M},{\bf p})$
such that
\begin{equation}\label{eq:DGFEM}
G_N(U,X)=\iprod{U^0_-,X^0_+}+\int_0^{t_N} \iprod{f,X}\,dt\qquad\forall\, X\in\W({\cal M},{\bf p}).
\end{equation}

 Let $u$ be the solution of~(\ref{eq:ivp}) and $U$ the DG approximation defined
in~(\ref{eq:DGFEM}).  Decomposing the error $U-u$ into the two terms:
\begin{equation}\label{eq: decompose U-u}
U-u=(U-\Pi u)+(\Pi u-u)=:\theta+ \eta\,.
\end{equation}
where $\Pi u \in \W({\cal M},{\bf p})$ is the  $hp$-version projection of $u$ defined by: for $1\le n\le N,$
\begin{equation}
\label{eq:Pi^p-def} \Pi u(t_n^-)-u(t_n)=0\quad\text{and}\quad \int_{t_{n-1}}^{t_n}\, \iprod{ u\,-\Pi u,v} \,dt=0\quad \forall\,v\in
\Poly_{p_n-1},
\end{equation}
% Note that for $p_n=0$, the second conditions are not required.

The bound of $\eta$ follows from the next theorem.
\begin{theorem}\label{thm:eta-approx}
Let $1\leq n \leq N$ and $0\le q_n\le p_n$. If $u^{(q_n+1)}|_{I_n} \in L_2(I_n;H^1(\Omega))$, then
\[
 \int_{t_{n-1}}^{t_n} \|(\Pi u-u)'\|^2_1\,dt
\le C\,p_n^2\left(\frac{k_n}{2}\right)^{2q_n}\Gamma_{p_n,q_n}\int_{t_{n-1}}^{t_n} \|u^{(q_n+1)}\|^2_1\,dt
\]
where $\Gamma_{p_n,q_n}=\frac{\Gamma(p_n-q_n+1)}{\Gamma(p_n+q_n+1)}$  and the constant $C$ is independent of $k_n$, $p_n$, $q_n$, and $u$.
\end{theorem}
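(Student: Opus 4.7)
The plan is to reduce the estimate to the reference interval $\hat I = (-1,1)$ by the affine map $t = t_{n-1} + (k_n/2)(\xi+1)$, then invoke the $hp$-version approximation result for this particular DG projection, which is standard (see, e.g., \cite{SchoetzauSchwab00,MustaphaBrunnerMustaphaSchoetzau2011}).

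As a first step, since the projection $\Pi$ acts only in the temporal variable and commutes with the spatial gradient, I would pull out the $H^1(\Omega)$-norm and reduce to a scalar estimate: it suffices to show
\[
\int_{-1}^{1} |(\hat v - \hat \pi_{p_n} \hat v)'|^2\, d\xi
  \le C\, p_n^{2}\,\Gamma_{p_n,q_n}\int_{-1}^{1} |\hat v^{(q_n+1)}|^{2}\, d\xi
\]
for any scalar $\hat v \in H^{q_n+1}(\hat I)$, where $\hat\pi_p \hat v \in \Poly_p$ is defined by the two conditions $\hat\pi_p \hat v(1)=\hat v(1)$ and $\int_{-1}^1 (\hat v - \hat\pi_p \hat v)\,w\, d\xi = 0$ for all $w \in \Poly_{p-1}$. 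Applying this bound pointwise in $x$, together with the analogous bound for $\nabla \hat v$, and integrating over $\Omega$ would yield the $H^1(\Omega)$-valued statement.

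To establish the scalar bound, I would expand $\hat v$ in the Legendre series $\hat v = \sum_{j\ge 0} a_j L_j$, and show by the $L^2$-orthogonality of $\{L_j\}$ and the endpoint normalization $L_j(1)=1$ that
\[
\hat v - \hat\pi_{p_n}\hat v = \sum_{j \ge p_n+1} a_j \bigl(L_j - L_{p_n}\bigr)\,,
\]
or more precisely a sum of this shape so that both conditions defining $\hat\pi_{p_n}$ are met. Differentiating in $\xi$ and using the standard Legendre derivative identity $L_j'(\xi) = \sum_{0\le \ell \le j-1,\, j-\ell\text{ odd}} (2\ell+1)L_\ell(\xi)$ along with orthogonality, one controls $\|(\hat v - \hat\pi_{p_n}\hat v)'\|_{L^2(\hat I)}^2$ by a sum $\sum_{j \ge p_n+1} j(j+1) a_j^2$. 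The key $hp$-estimate on the Legendre coefficients, $|a_j|^2 \lesssim \Gamma(j-q_n+1)/\Gamma(j+q_n+1) \,\|\hat v^{(q_n+1)}\|^2_{L^2(\hat I)}\big/(2j+1)$, then gives the factor $\Gamma_{p_n,q_n}$ after using monotonicity of $j\mapsto \Gamma(j-q_n+1)/\Gamma(j+q_n+1)$, and the extra $p_n^2$ comes from the $j(j+1)$ weight. Finally, scaling back via $\partial_t = (2/k_n)\partial_\xi$ and $dt = (k_n/2)\,d\xi$ produces the factor $(k_n/2)^{2q_n}$ exactly: the $q_n+1$ derivatives in $\hat v^{(q_n+1)}$ contribute $(2/k_n)^{2(q_n+1)}$, the single derivative on the left contributes $(k_n/2)^{2}$, and the Jacobian contributes $(k_n/2)^{2}$ (once on each side), net $(k_n/2)^{2q_n}$.

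The main obstacle will be the sharp reference-interval bound on $\|(\hat v - \hat \pi_{p_n}\hat v)'\|_{L^2(\hat I)}$ with precisely the constant $p^2\,\Gamma(p-q+1)/\Gamma(p+q+1)$; the Legendre-coefficient machinery needs to be executed carefully so that the $p^2$ factor is not absorbed too crudely and the gamma-function ratio appears in exactly the claimed form. Since the identical projection and estimate are established in \cite{SchoetzauSchwab00} and reused in \cite{MustaphaBrunnerMustaphaSchoetzau2011,MustaphaSchoetzau2013}, I would simply quote the scalar estimate from there and devote the proof to the reduction and scaling described above rather than redo the Legendre computation from scratch.
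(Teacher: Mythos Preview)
Your proposal is correct and in fact more detailed than what the paper does: the paper's proof consists solely of the citation ``See~\cite[Section~3]{SchoetzauSchwab00}'', deferring entirely to the reference you also invoke. The reduction-to-reference-interval and Legendre-expansion argument you outline is precisely the content of that cited section, so your final suggestion---to quote the scalar estimate from \cite{SchoetzauSchwab00} and record only the scaling---is exactly the paper's approach.
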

\begin{proof}  See ~\cite[Section~3]{SchoetzauSchwab00} for the proof.$\quad \Box$ \end{proof}
The main task now is to estimate ~$\theta$. To do so, we use the contribution from the stability results, the continuity property of the operator $\B_\alpha$, the inverse inequality, in addition to some other technical steps. In comparison to the case $0<\alpha<1$, the achieved bound of $\theta$ in \cite{MustaphaSchoetzau2013} is  weaker (by far) than the one below. This is due to the different properties of $\B_\alpha$ and $\I^\alpha$ and also because of the technique used here.

%We state our next lemma after
%introducing the following notations:
%\[\|\phi\|_{I_n}=\sup_{t\in I_n}\|\phi(t)\| \qquad\text{and}\qquad |\phi|_n=\max\{\|\phi^{n-1}_+\|,\|\phi^n_-\|\}\]
\begin{theorem}\label{lem: bound of theta} Assume that the time-step
 sizes are nondecreasing. Then, for $1\le n\le N,$ if the solution $u \in W^{1,1}((0,t_n); H^1_0(\Omega))$, we have
\begin{multline*}|\theta|_n^2+ \frac{k_n^{\min\{1,p_n-1\}}}{p_n^2}\Big( \sup_{t\in I_n} \|\theta(t)\|^2\Big) \\
\leq C_{\alpha,T} \left(\,\|U^0_--u_0\|^2 + t_n \max_{j=1}^{n}
  k_j^{\alpha}\,\big(\int_{t_{j-1}}^{t_j}\| \eta'\|_1 dt\big)^2\right),
\end{multline*}
where  $|\theta|_n:=\max\{\|\theta^{n-1}_+\|,\|\theta^n_-\|\}$.
\end{theorem}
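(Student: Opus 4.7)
My plan is to derive a Galerkin orthogonality relation for $\theta$, apply the stability estimate of Theorem~\ref{theorem: stability} to $\theta$ with an appropriate effective source, bound the resulting $\eta$-dependent right-hand side, and finally recover the sup-norm portion via a polynomial inverse inequality.

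First, I would subtract \eqref{eq: DG step} from the analogous step equation satisfied by the exact solution~$u$ (which, being continuous, has vanishing jumps and satisfies $u^0_\pm = u_0$). Splitting $U-u=\theta+\eta$ and using the projection properties \eqref{eq:Pi^p-def}---namely $\eta(t_j^-)=0$ and $\int_{I_j}\iprod{\eta,v}\,dt=0$ for all $v\in\Poly_{p_j-1}$---I would integrate $\int_{I_j}\iprod{\eta',X}\,dt$ by parts in time; the boundary and jump terms generated by $\eta$ cancel exactly. What remains is
\begin{equation*}
\iprod{\theta^{j-1}_+-\theta^{j-1}_-,X^{j-1}_+}+\int_{I_j}\bigl(\iprod{\theta',X}+A(\B_\alpha\theta,X)\bigr)\,dt=-\int_{I_j}A(\B_\alpha\eta,X)\,dt
\end{equation*}
for $1\le j\le n$, which has the structure of the original DG step for $\theta$ with effective initial datum $\theta^0_- = U^0_- - u_0$ and effective source $f_{\rm eff}=-A\B_\alpha\eta$. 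Invoking Theorem~\ref{theorem: stability} (or, equivalently, repeating its proof with $X=\theta$), the identity $I^{-\alpha}\B_\alpha = I$ gives $g_{\rm eff}=I^{-\alpha}f_{\rm eff}=-A\eta$ and $A^{-1}f_{\rm eff}=-\B_\alpha\eta$, so $\iprod{g_{\rm eff},A^{-1}f_{\rm eff}}=A(\eta,\B_\alpha\eta)$, producing
\begin{equation*}
|\theta|_n^2+2\int_0^{t_n}A(\B_\alpha\theta,\theta)\,dt\le 4\|U^0_--u_0\|^2+4d_\alpha^2\int_0^{t_n}|A(\eta,\B_\alpha\eta)|\,dt.
\end{equation*}

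The main technical step is then to show that $\int_0^{t_n}|A(\eta,\B_\alpha\eta)|\,dt\le C\,t_n\max_{j\le n} k_j^\alpha E_j^2$, where $E_j:=\int_{I_j}\|\eta'\|_1\,dt$. My plan is to use $|A(\eta,\B_\alpha\eta)|\le\|\nabla\eta\|\|\nabla\B_\alpha\eta\|$ together with the fundamental-theorem bound $\|\nabla\eta(t)\|\le E_{j(t)}$ (which holds because $\eta(t_{j(t)}^-)=0$). For $\nabla\B_\alpha\eta$, I would integrate by parts piecewise inside the convolution defining $\B_\alpha\eta$; because $\eta(t_i^-)=0$ on each interior node and $\omega_{\alpha+2}(0)=0$, only the $\eta(t_{i-1}^+)$ terms and the piecewise $\eta'$ integral survive, giving
\begin{equation*}
\B_\alpha\eta(t)=\sum_{i=1}^{j(t)}\omega_{\alpha+1}(t-t_{i-1})\eta(t_{i-1}^+)+\int_0^t\omega_{\alpha+1}(t-s)\eta'(s)\,ds,
\end{equation*}
with $\|\nabla\eta(t_{i-1}^+)\|\le E_i$. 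Subinterval-by-subinterval estimation of the integrable singular kernel $\omega_{\alpha+1}$ (using the mean value theorem on the primitive $\omega_{\alpha+2}$, the monotonicity of $t\mapsto t^\alpha$, and crucially the nondecreasing-timestep hypothesis $k_j\le k_n$ to dominate $\omega_{\alpha+1}(t_{\ell-1}-t_{i-1})$ by $\max_j k_j^\alpha$ on each $I_\ell$) produces a per-interval bound of order $k_\ell\max_j k_j^\alpha E_j^2$; summing on $\ell$ recovers the factor $t_n=\sum_\ell k_\ell$. I expect this to be the main obstacle: loose handling of the kernel easily costs a power of $k^{\alpha+1}$ or destroys the $\max$ structure, and both pieces of the representation must be combined carefully with Young/AM-GM to avoid cross-terms of the wrong order.

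Finally, the sup-norm bound follows from standard polynomial inverse inequalities. For $p_n\ge 2$, $\sup_{I_n}\|\theta\|^2\le Cp_n^2 k_n^{-1}\int_{I_n}\|\theta\|^2\,dt$, so
\begin{equation*}
\frac{k_n}{p_n^2}\sup_{I_n}\|\theta\|^2\le C\int_0^{t_n}\|\theta\|^2\,dt\le C_{\alpha,T}\int_0^{t_n}A(\B_\alpha\theta,\theta)\,dt,
\end{equation*}
by the Poincar\'e inequality $\|\theta\|^2\le\lambda_1^{-1}A(\theta,\theta)$ and the coercivity of Lemma~\ref{lemma: lower bound of the positivity}~(i) (with the constant absorbing $T^{-\alpha}\lambda_1^{-1}c_\alpha^{-1}$); this quantity is already controlled by the previous displayed estimate. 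When $p_n=1$, the function $\theta$ is affine on $I_n$ and convexity of the $L_2(\Omega)$-norm gives $\sup_{I_n}\|\theta\|\le|\theta|_n$ directly, matching the prefactor $k_n^{0}/1 = 1$. Combining the two bounds finishes the proof.
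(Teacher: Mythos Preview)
Your overall architecture---Galerkin orthogonality for $\theta$, reuse of the stability argument, separate treatment of $p_n=1$ versus $p_n\ge 2$ via inverse/Poincar\'e inequalities---matches the paper exactly. The only substantive divergence is in how you bound the $\eta$-term, and there your sketch has a real gap.

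You pass to the pointwise estimate $|A(\eta,\B_\alpha\eta)|\le\|\nabla\eta\|\,\|\nabla\B_\alpha\eta\|$ and then try to control $\|\nabla\B_\alpha\eta\|$ through the jump/derivative representation of $\B_\alpha\eta$. The specific mechanism you describe---``dominate $\omega_{\alpha+1}(t_{\ell-1}-t_{i-1})$ by $\max_j k_j^{\alpha}$''---does not close. On $I_\ell$ the jump sum has $\ell$ terms, each bounded this way by $C(\max_j E_j)\max_j k_j^{\alpha}$, so after multiplying by $E_\ell k_\ell$ and summing on $\ell$ you pick up an unwanted factor that grows with $n$; even a more careful Riemann-sum treatment of $\sum_i\int_{I_\ell}\omega_{\alpha+1}(t-t_{i-1})\,dt$ (which telescopes to $\omega_{\alpha+2}(t_\ell)$ on a uniform mesh) still leaves you a factor $t_\ell^{\alpha+1}$ rather than $k_\ell^{\alpha+1}$. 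The root cause is that pointwise Cauchy--Schwarz discards the sign structure of $A(\B_\alpha\eta,\eta)$ and the cancellation between the jump and derivative pieces of your representation.

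The paper avoids this entirely: it keeps the bilinear form intact, writes $\eta(t)=-\int_t^{t_j}\eta'(q)\,dq$ on $I_j$, swaps the order of integration to produce $\int_{I_j}A\bigl(\I^{-\alpha}\eta(t_{j-1})-\I^{-\alpha}\eta(q),\eta'(q)\bigr)\,dq$, and splits this into a local piece and a history piece. The history piece is then controlled by the exact identity
\[
\int_0^{t_{j-1}}\bigl[\omega_{\alpha+1}(t_{j-1}-s)-\omega_{\alpha+1}(q-s)\bigr]\,ds
\le \omega_{\alpha+2}(k_j),
\]
which collapses the entire memory contribution on $I_j$ to a single $k_j^{\alpha+1}$-sized term. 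Summing $\omega_{\alpha+2}(k_j)\max_{i\le j}E_i^2$ over $j$ and using $k_i\le k_j$ (so that $k_j^{\alpha}\max_{i\le j}E_i^2\le\max_i k_i^{\alpha}E_i^2$) then yields exactly $t_n\max_j k_j^{\alpha}E_j^2$. Your final sup-norm step is fine and coincides with the paper's.
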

\begin{proof}
Since  $G_n(u,X)=\iprod{u_0,X^0_+}+\int_0^{t_n} \iprod{f,X}\,dt$, %(becuase~$u  \in W^{1,1}((0,t_n); H^1_0(\Omega))$),
we have
\[ G_n(U-u,X)=\iprod{U^0_--u_0,X^0_+}\qquad \forall\, X\in\W({\cal M},{\bf p}).\]
Hence,  the alternative expression for~$G_N$ in \eqref{rem:GN-alt}, and the fact that
$\eta^n=0$ and $\int_{t_{n-1}}^{t_n}\iprod{\eta,X'}\,dt=0$ for all $1\leq n\leq N$,  by definition of the operator $\Pi$ (note that for $p_n=0$,
we have $X'\equiv 0$), yield
\begin{align*}
G_n(\theta,X)&=\iprod{U_-^0-u_0,X^0_+}-G_n(\eta,X) =\iprod{U_-^0-u_0,X^0_+}+\int_0^{t_n}\,
     A(\B_\alpha \eta,X)\,dt
\end{align*}
for all $X\in\W({\cal M},{\bf p})$. Since this equation has the same form as~\eqref{eq:DGFEM}, following the proof of the stability in
Theorem~\ref{theorem: stability}, we notice that for $1\le n\le N$,
\begin{multline}
\label{eq:q1+q2+q3 new} |\theta|_n^2+2\,\int_0^{t_n} A(\B_\alpha \theta,\theta)\,dt
   \leq  4\,\|U^0_--u_0\|^2
   +4\, d_\alpha^2\,\max_{j=n-1}^n \int_0^{t_j} A(\B_\alpha \eta,\eta)\,dt
\,.
\end{multline}
To estimate the last term, we use the equality $\eta(t)=-\int_t^{t_j}\eta'(q)\,dq$ for $t\in I_j$,
 then  changing the order of integrations and integrating,
%\[|\eta|_{\frac{\alpha}{2},n}^2&=\sum_{j=1}^n \int_{I_j}  A(\B_\alpha\eta(t),\eta(t))\,dt\]
\begin{equation}\label{eq: mid-step}
\begin{aligned}
\int_0^{t_n} A(\B_\alpha \eta,\eta)\,dt&=
-\sum_{j=1}^n \int_{t_{j-1}}^{t_j} \int_t^{t_j}  A(\B_\alpha\eta(t),\eta'(q))\,dq\,dt\\
&= -\sum_{j=1}^n\int_{t_{j-1}}^{t_j}\int_{t_{j-1}}^q   A(\B_\alpha\eta(t),\eta'(q))\,dt\,dq\\
    & =  \sum_{j=1}^n\int_{t_{j-1}}^{t_j}A(\I^{-\alpha}\eta(t_{j-1})-\I^{-\alpha}\eta(q), \eta'(q))\,dq \\
    &= \sum_{j=1}^n\I_1^j+\sum_{j=1}^n\I_2^j
\end{aligned}
\end{equation}
where
\[\begin{split}
\I_1^j&:=-  \int_{t_{j-1}}^{t_j}\int_{t_{j-1}}^q
    \omega_{\alpha+1}(q-s)A(\eta(s),\eta'(q))\,ds\,dq\\
    &= \int_{t_{j-1}}^{t_j}\int_{t_{j-1}}^q\int_s^{t_j}
    \omega_{\alpha+1}(q-s)A(\eta'(t),\eta'(q))\,dt\,ds\,dq\end{split}\]
    and
    \[\begin{split} \I_2^j&:= \int_{t_{j-1}}^{t_j} \int_0^{t_{j-1}}
    [\omega_{\alpha+1}(t_{j-1}-s)-\omega_{\alpha+1}(q-s)] A(\eta(s), \eta'(q))\,ds\,dq\\
    &=-\sum_{i=1}^{j-1} \int_{t_{j-1}}^{t_j} \int_{t_{i-1}}^{t_i}\int_s^{t_i}
    [\omega_{\alpha+1}(t_{j-1}-s)-\omega_{\alpha+1}(q-s)] A(\eta'(t), \eta'(q))\,dt\,ds\,dq\,.\end{split}\]
 To bound $\I_1^j$ and $\I_2^j$, we use the Cauchy-Schwarz inequality and  integrating
 \begin{align*} \I_1^j &\le  \int_{t_{j-1}}^{t_j}\|\nabla \eta'(t)\|\int_{t_{j-1}}^q \omega_{\alpha+1}(q-s)\int_{t_{j-1}}^{t_j} \|\nabla \eta'(q)\|\,dt\,ds\,dq
\\ &\le  \omega_{2+\alpha}(k_j)\Big(\int_{t_{j-1}}^{t_j}\|\eta'\|_1 dt\Big)^2
\end{align*}
and
\begin{align*}
\I_2^j &\le \sum_{i=1}^{j-1}   \int_{t_{j-1}}^{t_j}\|\nabla \eta'(q)\| \int_{t_{i-1}}^{t_i}
    [\omega_{\alpha+1}(t_{j-1}-s)-\omega_{\alpha+1}(t_j-s)] \int_s^{t_i} \|\nabla \eta'(t)\| \,dt\,ds\,dq\\
&\le   \max_{i=1}^{j}\, \left(\int_{t_{i-1}}^{t_i}\|\nabla \eta'\|dt\right)^2 \, \int_0^{t_{j-1}}
    [\omega_{\alpha+1}(t_{j-1}-s)-\omega_{\alpha+1}(t_j-s)] \,ds
\\
    &\le    \omega_{2+\alpha}(k_j) \max_{i=1}^{j}  \Big(\int_{t_{i-1}}^{t_i}\| \eta'\|_1dt\Big)^2\,.
%\\
%&\le k_j \max_{i=1}^{j} \omega_{\alpha+2}(k_i)\int_{I_i} \|\eta'(t)\|_1^2 \,dt
\end{align*}
Now, inserting  the estimates   of $\I_1^j$ and $\I_2^j$ in \eqref{eq: mid-step}, and using the mesh assumption $k_i\le k_j$ for $i\le j$. This
implies
\[
\int_0^{t_n} A(\B_\alpha \eta,\eta)\,dt \leq \frac{2\,t_n}{\Gamma(\alpha+2)} \max_{j=1}^{n}
 k_j^{\alpha}\,\Big(\int_{t_{j-1}}^{t_j}\| \eta'\|_1 dt\Big)^2,\]
and therefore, for $1\le n\le N$, \[|\theta|_n^2+2\,\int_0^{t_n} A(\B_\alpha \theta,\theta)\,dt
   \leq  4\,\|U^0_--u_0\|^2
  +\frac{4\,t_n \, d_\alpha^2}{\Gamma(\alpha+2)} \max_{j=1}^{n}  k_j^{\alpha}\Big(\int_{t_{j-1}}^{t_j}\| \eta'\|_1dt\Big)^2\,.
\]
But, for $p_n=1$, the left-hand side is $\ge \sup_{t\in I_n} \|\theta(t)\|^2$, however for $p_n\ge 2$, it is
\[\ge c_\alpha\,t_n^{\alpha}\int_{t_{n-1}}^{t_n}\|\nabla \theta\|^2\,dt
\ge C\,c_\alpha t_n^{\alpha}\int_{t_{n-1}}^{t_n}\|\theta\|^2\,dt\ge C\,c_\alpha t_n^{\alpha}\frac{k_n}{p_n^2}\Big(\sup_{t\in I_n}\|\theta(t)\|^2\Big)\] by the
assumption that the operator  $A$  possesses a complete orthonormal eigensystem~$\{\lambda_m, \phi_m\}_{m=1}^\infty$, the coercivity property in
Lemma \ref{lemma: lower bound of the positivity} (i), and the Poincare's ($\theta|_{\partial \Omega}=0$) and inverse ($\theta|_{I_n} \in
\Poly_{p_n}$) inequalities. This completes the proof.$\quad \Box$
\end{proof}

The main abstract error bound will be derived in the next theorem. For convenience, we  introduce the following notation:
\[\|\phi\|_{L_\infty(L_2)}:=\|\phi\|_{L_\infty((0,T);L_2(\Omega))}=\max_{n=1}^N\Big(\sup_{t\in I_n} \|\phi(t)\|\Big).\]
\begin{theorem}\label{thm:error-bound-abstract}
Let $u$ be the solution of (\ref{eq:ivp}) and $U$ be the DG solution defined by~\eqref{eq: DG step} with $U^0_-=u_0$ (for
simplicity). Assume that  $k_i\le k_j$ for $i\le j$. Then we have
%\[
%  \|e\|^2_{nd}  +\max_{n=1}^{N}\left(\frac{k_n^{\min\{1,p_n-1\}}}{t_n |{\bf p}|^2}\right) \|e\|_{J}^2
%    \le C_{\alpha,T}\, \big (k_1^{\alpha} \left(\int_{I_1}\| \eta'\|_1dt\right)^2+
%  \max_{j=2}^{N}  k_j^{\alpha+1}
%\int_{I_j}\| \eta'\|_1^2dt\big)\,.
%\]
\begin{multline*}\max_{ n=1}^ N\{\|U^{n-1}_+-u(t_{n-1})\|,\|U^{n}_--u(t_{n})\|\}
  \\+\min_{n=1}^{N}\left(t_n^{-1} k_n^{\min\{1,p_n-1\}}\right) \frac{\|u-U\|_{L_\infty(L_2)}^2}{\max_{n=1}^N p_n^2}
    \le C_{\alpha,T}\, \max_{n=1}^{N} k_n^{\alpha} \Big(\int_{t_{n-1}}^{t_n}\| \eta'\|_1dt\Big)^2\,.
\end{multline*}\end{theorem}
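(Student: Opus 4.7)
The plan is to derive the abstract error bound by exploiting the splitting $U-u=\theta+\eta$ from \eqref{eq: decompose U-u} and applying Theorem~\ref{lem: bound of theta} to~$\theta$, while treating $\eta$ by the fundamental theorem of calculus. Since the hypothesis $U^0_-=u_0$ makes the initial term in the bound of Theorem~\ref{lem: bound of theta} vanish, all that remains is to pass from $\theta$ to the full error $U-u$ by accounting for $\eta$ at the nodes and on each interval.

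First I would exploit the defining property~\eqref{eq:Pi^p-def} of~$\Pi$, namely $\Pi u(t_n^-)=u(t_n)$, to get $\eta^n_-=0$, hence $U^n_--u(t_n)=\theta^n_-$ exactly. For the left endpoint, $U^{n-1}_+-u(t_{n-1})=\theta^{n-1}_++\eta^{n-1}_+$, where $\eta^{n-1}_+$ is generally nonzero since $\Pi u$ is discontinuous at the mesh points. Using $\eta(t_n^-)=0$ together with $\eta(t)=-\int_t^{t_n}\eta'(s)\,ds$ for $t\in I_n$, I can bound
\[
\sup_{t\in I_n}\|\eta(t)\|\ \le\ \int_{t_{n-1}}^{t_n}\|\eta'\|\,dt\ \le\ \int_{t_{n-1}}^{t_n}\|\eta'\|_1\,dt,
\]
and in particular $\|\eta^{n-1}_+\|$ is controlled by the same right-hand side. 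This already handles the contribution of $\eta$ to both nodal errors and to $\|u-U\|_{L_\infty(L_2)}$.

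Next I would invoke Theorem~\ref{lem: bound of theta} with $U^0_-=u_0$ to obtain, for each~$n$,
\[
|\theta|_n^2+\frac{k_n^{\min\{1,p_n-1\}}}{p_n^2}\sup_{t\in I_n}\|\theta(t)\|^2
\ \le\ C_{\alpha,T}\,t_n\max_{j\le n}k_j^{\alpha}\Big(\int_{t_{j-1}}^{t_j}\|\eta'\|_1\,dt\Big)^2.
\]
Dividing by~$t_n$, then taking the minimum in~$n$ of $t_n^{-1}k_n^{\min\{1,p_n-1\}}/p_n^2$ inside the supremum over~$n$ of $\sup_{I_n}\|\theta\|^2$, and taking the maximum in~$n$ on the nodal part, both LHS terms of the claimed inequality (for~$\theta$ only) fall out from a single application of the monotonicity $k_i\le k_j$ for $i\le j$ that makes $\max_{j\le n}k_j^{\alpha}=\max_{j\le N}k_j^\alpha$ harmlessly independent of~$n$. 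Finally I would combine this with the $\eta$ bound from the previous paragraph via the triangle inequality to convert the $\theta$-bound into the advertised bound for $U-u$.

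The main obstacle is bookkeeping rather than a deep new estimate: I need to ensure the squared/unsquared structure on both sides matches up after applying the triangle inequality (since $\|\eta^{n-1}_+\|^2\le(\int\|\eta'\|_1\,dt)^2$ already has the same shape as the right-hand side, this amounts to absorbing a constant), and to verify that inserting the $\eta$ contribution does not spoil the normalization by $t_n^{-1}k_n^{\min\{1,p_n-1\}}/\max_n p_n^2$ in the $L_\infty(L_2)$ term. The latter is routine because $\sup_{I_n}\|\eta\|^2\le (\int_{t_{n-1}}^{t_n}\|\eta'\|_1\,dt)^2\le k_n^{-\alpha}\cdot k_n^{\alpha}(\cdots)$, which is dominated by the right-hand side provided $C_{\alpha,T}$ is allowed to depend on $T$; otherwise one uses $\min_n(t_n^{-1}k_n^{\min\{1,p_n-1\}})\le T^{-1}$ to absorb the extra factor. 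With this the argument closes.
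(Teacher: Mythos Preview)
Your proposal is correct and follows essentially the same route as the paper: decompose $U-u=\theta+\eta$ via \eqref{eq: decompose U-u}, apply Theorem~\ref{lem: bound of theta} to~$\theta$ (with the initial term vanishing since $U^0_-=u_0$), bound $\eta$ uniformly on each $I_n$ through $\eta(t)=-\int_t^{t_n}\eta'(s)\,ds$ (using $\eta(t_n^-)=0$), and combine via the triangle inequality. The paper's own proof is just a one-sentence reference to these same ingredients, so your write-up in fact supplies more of the bookkeeping than the paper does; your observation that the $\eta$ contributions are absorbed because $k_n^{-\alpha}\le T^{-\alpha}$ and $t_n^{-1}k_n^{\min\{1,p_n-1\}}\le 1$ is exactly what is needed to close the argument.
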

%\emph{Proof}
\begin{proof}
 This bound follows from the decomposition of the error  in (\ref{eq: decompose U-u}), the triangle inequality, Theorem
 \ref{lem: bound of theta},
and the fact (by the interpolation properties of the  operator $\Pi$),
$  \|\eta\|_{L_\infty(L_2)} = \max_{1\le n\le N} \Big(\sup_{t\in I_n} \|\eta(t)\|\Big) \le  \max_{1\le n\le N} \int_{t_{n-1}}^{t_n}\| \eta'\|dt\,.$ $\quad \Box$
\end{proof}
% \begin{corollary}\label{cor:error-bound}
%For $0\le q_j\le p_j$ and $u\in H^{q_j+1}(I_j;H^1(\Omega))$, $1\leq j\leq N$, we then have the error estimates
%\[  \|e\|^2_{nd}  +\frac{\|e\|_{J}^2}{\max_{j=1}^N p_j^2}\min_{n=1}^{N}\frac{k_n^{\min\{1,p_n-1\}}}{t_n}
%  \leq C_{\alpha,T}  \max_{j=1}^{N}
%\left(\frac{k_j}{2}\right)^{2q_j+\alpha+1}\Gamma_{p_j,q_j}\,p_j^2 \int_{I_j} \|u^{(q_j+1)}\|_1^2 \,.
%\]
%\end{corollary}
%\begin{proof} The desired $hp$-version error estimates follows from Theorem ~\ref{thm:error-bound-abstract}, the inequality
% $\Big(\int_{I_j}\|\eta'\|dt\Big)^2\le k_j\int_{I_j}\|\eta'\|^2 dt$,  and Theorem \ref{thm:eta-approx}.$\quad \Box$
%\end{proof}1
%%%%%%%%%%%%%%%%%%%%%%%%%%%%%%%%%%%%%%%%%%%%%%%%%%%%%%%%%%%%%%%%%%%%%%%%%%%%%%%%%%%%%
\section{$h$-version errors} \label{sec:algebraic} In this section, we  focus on the explicit error bounds of the $h$-DG solution  $U$ of uniform degree $p$ on each subinterval $I_n$ for $2\le n\le N.$  Because of the singular behavior of the  solution  $u$ of \eqref{eq:ivp} near $t=0,$ the degree of $U$  on the first  subinterval $I_1$ will be chosen to be one (i.e., $p_1=1$). So,  ${\bf p}=(1,p,\cdots,p)$.
  However, the numerical results suggested that this modification is not always needed. More precisely,
 we are required to consider $p_1=1$ if the time mesh, \eqref{eq: standard tn}, is strongly graded.

Following~\cite{McLeanMustapha2009,MustaphaMcLean2011,MustaphaMcLean2012IMA}, we assume that the solution $u$ of~(\ref{eq:ivp}) satisfies:
\begin{equation} \label{eq:countable-regularity v1}
\|u^{(j)}(t)\|_1\le M\, t^{\sigma-j}\qquad \forall\, 1\le j\le p+1,
\end{equation}
for some positive constants $M$ and $\sigma$; for a proof we refer the reader to~\cite{McLean2010,McLeanMustapha2007}\,.

To compensate for singular behaviour of $u$ near $t=0$, we employ a family of non-uniform meshes denoted by ${\cal M}_\gamma$, where the
time-steps are graded towards $t=0$. Following \cite{McLeanMustapha2009,MustaphaMcLean2011,MustaphaMcLean2012IMA,MustaphaMcLean2012SINUM}, for a fixed parameter $\gamma\ge1$,  we assume that
\begin{equation}\label{eq: standard tn}
t_n=(nk)^\gamma \quad{\rm with}~~ k=\frac{T^{1/\gamma}}{N}\quad \text{for ~~$0\le n\le N$.}
\end{equation}
Noting that the time step sizes are nondecreasing, that is, $k_i\le k_j$ for $i\le j$. Moreover, one can show that
\begin{equation}\label{eq: mesh assumption} k_n\le \gamma k t_n^{1-1/\gamma}~~{\rm for}~~n\ge 1 ~~{\rm and}~~
t_n\le 2^\gamma t_{n-1}~{\rm for}~~ n\ge 2\,.\end{equation}

In the next theorem, we derive the error estimate for the $h$-DG solution over the graded mesh $\cal M_\gamma$\,. In the
$L_\infty(0,T)$-norm,  we prove an $O(k^{\max\{2,p+\frac{1}{2}\}+\frac{\alpha}{2}})$ convergence rate, i.e., short by $-\frac{\alpha}{2}$ power from
being optimal for $p= 1$ and by $\frac{1-\alpha}{2}$ power  for $p\ge 2$. However, the numerical results  indicate optimal $O(k^{p+1})$-rates
for  $p\ge 1$. Indeed, these results are high-order extensions (also improvements) of the ones shown in  \cite{McLeanMustapha2009,MustaphaMcLean2011,MustaphaMcLean2012IMA} for $p\in \{0, 1\}.$ In contrast, for $0<\alpha<1$, we successfully proved optimal $O(k^{p+1})$ convergence rates in \cite[Theorem 4.9]{MustaphaSchoetzau2013}, but in a much weaker norm. Noting that, the proof here is more technical but the general approach is partially similar to the proof of Theorem 4.9 in  \cite{MustaphaSchoetzau2013}.
\begin{theorem}\label{thm: ||U-u||}
Let the solution $u$ of~\eqref{eq:ivp} satisfy the regularity property~\eqref{eq:countable-regularity v1}. Let $U\in \W({\cal M}_{\gamma},{\bf
p})$ be the $h$-DG approximation with $U^0_-=u_0$. Then for $\gamma\ge1$, we have
%\[
% \|U-u\|_{nd} \le C\, k^{\min\{\gamma(\sigma+\frac{\alpha}{2}),p+1+\frac{\alpha}{2}\}}
%\]
%and for the uniform error,
\[
 \|U-u\|_{L_\infty(L_2)} \le C\,\times \begin{cases}
    k^{\min\{\gamma(\sigma+\frac{\alpha}{2}),2+\frac{\alpha}{2}\}}&\quad{\rm for}\quad
  p =   1,\\
  k^{\min\{\gamma(\sigma+\frac{\alpha}{2}),p+1+\frac{\alpha}{2}\}-\frac{1}{2}}&\quad{\rm for}\quad
  p \ge   2\end{cases}
\] where $C$ is a constant that depends only on $T$, $\alpha$,  $\gamma$, $\sigma$
 and $p$.
\end{theorem}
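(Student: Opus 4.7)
The plan is to apply the abstract bound of Theorem~\ref{thm:error-bound-abstract} and estimate the right-hand side $\max_n k_n^\alpha\bigl(\int_{t_{n-1}}^{t_n}\|\eta'\|_1\,dt\bigr)^2$ by combining the $hp$-projection estimate of Theorem~\ref{thm:eta-approx} with the regularity hypothesis~\eqref{eq:countable-regularity v1} and the grading~\eqref{eq: mesh assumption}. By Cauchy--Schwarz, $\int_{t_{n-1}}^{t_n}\|\eta'\|_1\,dt\le\sqrt{k_n}\,\bigl(\int_{t_{n-1}}^{t_n}\|\eta'\|_1^2\,dt\bigr)^{1/2}$, so the task reduces to bounding the $L_2$-integral of $\|\eta'\|_1$ on each $I_n$.

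On the initial interval $I_1$ I would take $q_1=0$ in Theorem~\ref{thm:eta-approx} (recall $p_1=1$), giving
\[
 \int_{I_1}\|\eta'\|_1^2\,dt\le C\int_0^{k_1}\|u^{(1)}\|_1^2\,dt\le C\int_0^{k_1}t^{2\sigma-2}\,dt\le C k_1^{2\sigma-1},
\]
so that $k_1^{\alpha}\bigl(\int_{I_1}\|\eta'\|_1\,dt\bigr)^2\le C k_1^{2\sigma+\alpha}=C k^{\gamma(2\sigma+\alpha)}$ since $t_1=k^\gamma$. For $n\ge 2$ I would take $q_n=p$; using $t\sim t_n$ on $I_n$ (which follows from $t_n\le 2^\gamma t_{n-1}$) together with~\eqref{eq:countable-regularity v1} yields $\int_{I_n}\|u^{(p+1)}\|_1^2\,dt\le C k_n t_n^{2\sigma-2p-2}$, hence
\[
  k_n^{\alpha}\Bigl(\int_{I_n}\|\eta'\|_1\,dt\Bigr)^2\le Cp^2 k_n^{2p+2+\alpha}\,t_n^{2\sigma-2p-2}\le Cp^2 k^{2p+2+\alpha}\,t_n^{2\sigma+\alpha-(2p+2+\alpha)/\gamma},
\]
where in the last step I invoke $k_n\le \gamma k\,t_n^{1-1/\gamma}$ from~\eqref{eq: mesh assumption}. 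Maximising the exponent of $t_n$ over $n\in\{2,\dots,N\}$ splits into the case $\gamma(2\sigma+\alpha)\ge 2p+2+\alpha$ (exponent nonnegative, sup attained at $t_n=T$, giving $k^{2p+2+\alpha}$) and the opposite case (exponent negative, sup attained at $t_2\sim k^\gamma$, giving $k^{\gamma(2\sigma+\alpha)}$). Combining with the $n=1$ contribution, we obtain
\[
  \max_{n}k_n^{\alpha}\Bigl(\int_{I_n}\|\eta'\|_1\,dt\Bigr)^2\le C k^{\min\{\gamma(2\sigma+\alpha),\,2p+2+\alpha\}}.
\]

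Finally I would substitute into Theorem~\ref{thm:error-bound-abstract}, distinguishing the two cases in the statement. For $p=1$ we have $p_n-1=0$ for every $n$, so $\min_n(t_n^{-1}k_n^{\min\{1,p_n-1\}})=\min_n t_n^{-1}=T^{-1}$ is a harmless constant and the bound becomes $\|u-U\|_{L_\infty(L_2)}\le C k^{\min\{\gamma(\sigma+\alpha/2),\,2+\alpha/2\}}$. For $p\ge 2$, only $n=1$ contributes a factor $k_n^0=1$ but the dominating minimum (over $n\ge 2$) is $k_n/t_n\ge c\,k\,t_n^{-1/\gamma}\ge c k/T^{1/\gamma}$, producing an additional factor $k^{-1/2}$ when taking the square root; this yields $\|u-U\|_{L_\infty(L_2)}\le C k^{\min\{\gamma(\sigma+\alpha/2),\,p+1+\alpha/2\}-1/2}$, matching the claim.

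The main technical obstacle is the interplay between the two singular sources: the low-regularity interval $I_1$ (where one must drop to $q_1=0$ and carry the exact power $k^{\gamma(\sigma+\alpha/2)}$) and the asymptotic intervals with $n\ge 2$ (where the grading trades $t_n$-weights for $k$-powers). Verifying that the maximum over $n\ge 2$ of $t_n^{2\sigma+\alpha-(2p+2+\alpha)/\gamma}$ is attained either at $t_N=T$ or at $t_2\sim k^\gamma$, and seeing that the second alternative matches exactly the contribution from $n=1$, is what produces the clean $\min\{\gamma(\sigma+\alpha/2),\,\cdot\}$ structure in the statement. The $k^{-1/2}$ loss for $p\ge 2$ reflects the weaker $L_\infty$-control available when the DG polynomial degree exceeds one.
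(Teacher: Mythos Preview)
Your argument for $n\ge 2$ coincides with the paper's: Cauchy--Schwarz to pass from $k_n^\alpha\bigl(\int\|\eta'\|_1\,dt\bigr)^2$ to $k_n^{\alpha+1}\int\|\eta'\|_1^2\,dt$, then Theorem~\ref{thm:eta-approx} with $q_n=p$, the regularity bound, and the grading inequality $k_n\le \gamma k\,t_n^{1-1/\gamma}$. Your handling of the prefactor $\min_n t_n^{-1}k_n^{\min\{1,p_n-1\}}$ also matches (the paper writes it as $\ge 1/N$, you as $\ge ck$; these agree since $N=T^{1/\gamma}/k$).

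The gap is on $I_1$. Your step
\[
 \int_0^{k_1}\|u'\|_1^2\,dt\;\le\; C\int_0^{k_1}t^{2\sigma-2}\,dt\;\le\; C\,k_1^{2\sigma-1}
\]
requires $2\sigma-2>-1$, i.e.\ $\sigma>\tfrac12$. For $\sigma\le\tfrac12$ the integral diverges and, more fundamentally, $u'\notin L_2(I_1;H^1(\Omega))$, so Theorem~\ref{thm:eta-approx} cannot be applied with $q_1=0$. The theorem only assumes $\sigma>0$, and the typical regularity exponent $\sigma=1+\alpha$ lies in $(0,\tfrac12]$ whenever $\alpha\le-\tfrac12$, so this case cannot be dismissed. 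The paper avoids the difficulty by \emph{not} passing through $L_2$ on $I_1$: it uses the explicit integral representation of $\eta'$ for the linear interpolant, equation~\eqref{eq: bound of the projection new}, to obtain directly
\[
 \int_{I_1}\|\eta'\|_1\,dt\;\le\;3\int_{I_1}\|u'\|_1\,dt\;\le\;C\,k_1^{\sigma},
\]
an $L_1$ bound that is finite for every $\sigma>0$. This is precisely why the abstract estimate in Theorem~\ref{thm:error-bound-abstract} is stated with $\bigl(\int_{I_n}\|\eta'\|_1\,dt\bigr)^2$ rather than $k_n\int_{I_n}\|\eta'\|_1^2\,dt$: the two are equivalent for $n\ge 2$, but only the first survives on the singular interval.
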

\emph{Proof} Theorem~\ref{thm:error-bound-abstract} yields
\begin{multline*}
 \min\{1,\min_{n=2}^{N}\big(t_n^{-1}k_n^{\min\{1,p-1\}}\big)\} \|u-U\|_{L_\infty(L_2)}^2
  \\ \le C \,k_1^{\alpha} \left(\int_0^{t_1}\| \eta'\|_1\right)^2+ C \max_{n=2}^N
k_n^{\alpha+1} \int_{t_{n-1}}^{t_n}\|\eta'\|_1^2.\end{multline*}
 Since
\[t_n^{-1}k_n^{\min\{1,p-1\}}=
1-\frac{t_{n-1}}{t_n}=1-\left(1-1/n\right)^\gamma \ge 1/n \ge 1/N~{\rm for}~p\ge 2,
\]
%\[  \|e\|^2_{nd}  +k^{\min\{1,p-1\}} \|e\|_{J_N}^2 \le C \,k_1^{\alpha} \left(\int_{I_1}\| \eta'\|_1dt\right)^2+ C \underset{2\le j\le N}{\max}
%k_j^{\alpha+1} \int_{I_j}\|\eta'\|_1^2dt.\]
\[  \frac{1}{N^{\min\{1,p-1\}}}\|u-U\|_{L_\infty(L_2)}^2 \le C \,k_1^{\alpha} \left(\int_0^{t_1}\| \eta'\|_1dt\right)^2+ C \max_{n=2}^N
k_n^{\alpha+1} \int_{t_{n-1}}^{t_n}\|\eta'\|_1^2dt.\] On the  subinterval $I_1$, $\Pi u \in \Poly_1$ and satisfies:
\[
\Pi u(t_1^-)=u(t_1)\quad\text{and}\quad \int_0^{t_1} \bigl[u(t)-\Pi u(t)]\,dt=0\,.
\]
 Explicitly,
the derivative of the interpolation error admits the integral representations~\cite[Equation~(3.8)]{MustaphaMcLean2009}:
\begin{equation}\label{eq: bound of the projection new}
\begin{aligned}
\eta'(t)&=-u'(t)+\frac{2}{k_1^2}\int_0^{t_1}
    s\,u'(s)\,ds,
        \quad\text{for $t\in I_1$.}
\end{aligned}
\end{equation}
So,  from the triangle inequality and \eqref{eq: bound of the projection new}, we notice that
      \[\int_0^{t_1}\,\| \eta '\|_1\,dt \le \int_0^{t_1}\left(\|u'(t)\|_1+\frac{2}{k_1}\,\int_0^{t_1}
    \|u'(s)\|_1\,ds\right)\,dt\le 3\int_0^{t_1}
    \|u'(t)\|_1\,dt\,.\]
Thus,   using
 the regularity assumption,  \eqref{eq:countable-regularity v1}, and the mesh property, \eqref{eq: mesh assumption},
\begin{equation}
\label{eq:exp-I1-L2 new new}k_1^{\alpha}\left(\int_0^{t_1}\|\eta'\|_1dt\right)^2 \le C\,k_1^{\alpha} \Bigl(\int_0^{t_1} t^{\sigma-1}\,dt\Bigr)^2
     =  C\frac{k_1^{2\sigma+\alpha}}{\sigma}  \le C k^{\gamma(2\sigma+\alpha)}\quad {\rm for}~~ \gamma \ge 1\,.
\end{equation}
In addition, for $n\ge2$, we use Theorem~\ref{thm:eta-approx} and get
\begin{equation*}%\label{eq: bound 1.2}
\begin{aligned}
k_n^{\alpha+1} \int_{t_{n-1}}^{t_n}\|\eta'\|_1^2dt &\le C  k_n^{2p+\alpha+1}
\int_{t_{n-1}}^{t_n}\|u^{(p+1)}\|^2dt\\
   & \le C\, k_n^{2p+2+\alpha}{t_n}^{2(\sigma-1-p)}\\
   & \le C\,k^{2p+2+\alpha} t_n^{2\sigma+\alpha -(2p+2+\alpha)/\gamma}\\
&       \le C\,k^{\min\{\gamma(2\sigma+\alpha),2p+2+\alpha\} }\quad {\rm for}~~  \gamma \ge 1.\quad\Box
\end{aligned}
\end{equation*}
%and therefore, the proof is complete now.
%%%%%%%%%%%%%%%%%%%%%%%%%%%%%%%%%%%%%%%%%%%%%%%%%%%%%%%%%%%%%%%%%%%%%%%%%%%%%%%%
\section{$hp$-version errors}\label{sec:exponential}
We discuss the error results of the $hp$-DGM based on geometrically refined time-steps and linearly increasing approximation orders.
Following~\cite{MustaphaBrunnerMustaphaSchoetzau2011,MustaphaSchoetzau2013}, we consider the $hp$-DGM for problems with solutions that have
start-up singularities as $t\to 0$, but are analytic for $t>0$. More precisely, we stipulate that the solution $u$ of \eqref{eq:ivp} has the
analytic regularity:
\begin{equation}
\label{eq:countable-regularity} \|u^{(j)}(t)\|+t\|u^{(j)}(t)\|_1\le M\, d^j \Gamma(j+1) t^{\sigma-j}\qquad \forall\, t \in (0,T],\ \forall\, j
\geq 1,
\end{equation}
for positive constants $\sigma$, $M$ and $d$. Proving the regularity statement~\eqref{eq:countable-regularity} remains an open issue, which is
beyond the scope of the present paper.
%%%%%%%%%%%%%%%%%%%%%%%%%%%%%%%%%%%%%%%%%%%

%%%%%%%%%%%%%%%%%%%%%%%%%%%%%%%%%%%%%%%%%%%%%
To resolve the singular behavior of the solution near $t=0$, we shall make use of geometrically refined time-steps and linearly increasing
degree vectors, and apply the $hp$-techniques that were developed
in~\cite{MustaphaBrunnerMustaphaSchoetzau2011,MustaphaSchoetzau2013,SchoetzauSchwab00}. To describe this, we first partition $(0,T)$ into
(coarse) time intervals $\{\mathfrak J_i\}_{i=1}^{\bf K}$. The first interval $\mathfrak J_1=(0,T_1)$ is then further subdivided geometrically
into $L+1$ subintervals $\{I_n\}_{n=1}^{L+1}$ as follows:
\begin{equation}
\label{eq:geometric-grading} t_0=0,\qquad t_n=\delta^{L+1-n} T_1\quad\text{for $1\leq n\leq L+1$}.
\end{equation}
As usual, we call $\delta\in(0,1)$ the geometric refinement factor, and $L$ is the number of refinement levels. From
(\ref{eq:geometric-grading}), we observe that the subintervals $\{I_n\}_{n=1}^{L+1}$ satisfy
\begin{equation}
\label{eq:lambda} k_n=t_n-t_{n-1}=\lambda t_{n-1}\quad {\rm with}\quad \lambda=(1-\delta)/\delta\quad{\rm for}~~n\ge 2.
\end{equation}
Let ${\cal M}_{L,\delta}$ be the mesh on $(0,T)$ defined in this way. The polynomial degree distribution $\bf p$ on ${\cal M}_{L,\delta}$ is
defined as follows. On the first coarse interval $\mathfrak J_1$ the degrees are chosen to be linearly increasing:
\begin{equation}
\label{eq:mu-linear} p_n=\lfloor \mu n \rfloor\qquad\text{for $1\leq n\leq L+1$},
\end{equation}
for a slope parameter $\mu>0$. On the coarse time intervals $\{\mathfrak J_i\}_{i=2}^{\bf K}$ away from $t=0$, we set the approximation degrees
uniformly to $p_{L+1}=\lfloor \mu (L+1) \rfloor$. The resulting $hp$-version finite element space is denoted by $\W({\cal M}_{L,\delta},\bf p)$.

Our main result of this section suggests  that non-smooth solutions satisfying~\eqref{eq:countable-regularity} can be approximated at
exponential rates convergence on the $hp$-version discretizations introduced above.  This will be done by proceeding along the lines of \cite[Theorem 4.2]{MustaphaBrunnerMustaphaSchoetzau2011} in our earlier work.
\begin{theorem}\label{thm:exponential-convergence}
%Let the solution $u$ of problem~\eqref{eq:ivp} satisfy the analytic regularity~\eqref{eq:countable-regularity}.
Let $U\in \W({\cal
M}_{L,\delta},{\bf p})$ be the $hp$-DG approximation  with $U_-^0=u_0$. Then there exists a slope $\mu_0>0$ depending on $\delta$ and the
constants $\sigma$ and $d$ in~\eqref{eq:countable-regularity} such that for linearly increasing polynomial degree vectors ${\bf p}$ with slope
$\mu\geq \mu_0$, % we have the error estimate
\[
   \|U-u\|_{L_\infty(L_2)}\le C\,{\rm exp}(-b\sqrt{\N}),
\]
with positive constants $C$ and $b$ that are independent of ~$\N:=\dim(\W({\cal M}_{L,\delta},{\bf p}))$, but depending on the problem
parameters $T$ and $\alpha$, the regularity parameters $M$, $d$ and $\sigma$ in~\eqref{eq:countable-regularity}, and the mesh parameters
$\delta$, $T_1$ and $\mu$.
\end{theorem}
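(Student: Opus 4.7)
The plan is to insert the geometrically refined mesh~${\cal M}_{L,\delta}$ and linear degree vector~${\bf p}$ into the abstract error estimate of Theorem~\ref{thm:error-bound-abstract}, and show that the right-hand side decays like $\exp(-cL)$; then one converts this to a bound in $\sqrt{\N}$ by counting temporal degrees of freedom.

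First, I would verify that the pre-factor
$\min_{n}(t_n^{-1}k_n^{\min\{1,p_n-1\}})\,/\max_n p_n^2$ appearing on the left-hand side of Theorem~\ref{thm:error-bound-abstract} contributes only a fixed (polynomial-in-$L$) loss. Indeed, on the geometric mesh $k_1=t_1$, while for $n\ge 2$ the relation~\eqref{eq:lambda} gives $t_n^{-1}k_n=1-\delta$, which is bounded below by a $\delta$-dependent constant; the factor $\max_n p_n^2\sim\mu^2 L^2$ is then absorbed into the exponential decay we establish below. It therefore suffices to estimate $\max_{1\le n\le N} k_n^\alpha\big(\int_{I_n}\|\eta'\|_1\,dt\big)^2$ and prove that it is bounded by $C\exp(-bL)$ for $\mu$ large enough.

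I would then split the maximum over three classes of subintervals: (a) the smallest interval $I_1$ adjacent to the singularity; (b) the geometrically refined intervals $I_n$ with $2\le n\le L+1$ inside $\mathfrak J_1$; and (c) the coarse intervals contained in $\{\mathfrak J_i\}_{i=2}^{\bf K}$ on which $u$ is analytic with bounded derivatives. For (a), because $t_1=\delta^L T_1$ is exponentially small in $L$, I would estimate $\int_{I_1}\|\eta'\|_1\,dt$ by a direct calculation using the regularity~\eqref{eq:countable-regularity}, producing a bound of the form $C\,t_1^{\sigma}$ (absorbing $\sigma$-dependent constants), so that $k_1^\alpha\bigl(\int_{I_1}\|\eta'\|_1\,dt\bigr)^2 \le C\,t_1^{2\sigma+\alpha}\le C\,\delta^{L(2\sigma+\alpha)}$. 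For (b), I would invoke Theorem~\ref{thm:eta-approx} with the choice $q_n=p_n$, yielding
\[
\int_{I_n}\|\eta'\|_1^2\,dt\le C\,p_n^2\Bigl(\tfrac{k_n}{2}\Bigr)^{2p_n}\frac{1}{\Gamma(2p_n+1)}\int_{I_n}\|u^{(p_n+1)}\|_1^2\,dt,
\]
and combine it with the analytic regularity estimate $\|u^{(p_n+1)}(t)\|_1\le M d^{p_n+1}(p_n+1)!\,t^{\sigma-p_n-2}$ together with $k_n\sim\lambda t_{n-1}$ and $t_{n-1}\sim\delta\,t_n$ from~\eqref{eq:lambda}. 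Using the Stirling-type identity $\Gamma(p_n+2)^2/\Gamma(2p_n+1)\sim (p_n/e)^{2p_n}\cdot\text{poly}(p_n)$ and the linear rule $p_n=\lfloor\mu n\rfloor$, the resulting bound takes the form $C\,t_n^{2\sigma+\alpha}\bigl(\kappa(\delta,d,\mu)\bigr)^{n}$, where $\kappa$ can be made strictly less than one by taking $\mu\ge\mu_0$ for a suitable $\mu_0=\mu_0(\delta,d,\sigma)$; the key algebraic balance is that of the factors $\lambda^{2p_n}d^{2p_n}$ against $(2p_n)!/(p_n!)^2\sim 4^{p_n}/\sqrt{p_n}$, which is exactly the classical $hp$-balance carried out in~\cite{SchoetzauSchwab00,MustaphaBrunnerMustaphaSchoetzau2011}. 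For (c), on the coarse intervals the polynomial degree is uniform equal to $\lfloor\mu(L+1)\rfloor$ and $u$ together with all its derivatives is bounded, so the same device yields an estimate of order $\exp(-b\mu L)$ without further difficulty.

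Taking the maximum of the three contributions produces $\|U-u\|_{L_\infty(L_2)}^2\le C\,\exp(-b'L)$ for some $b'>0$ depending on $\delta$, $d$, $\sigma$, $\mu$. Finally, I would count $\N=\dim\W({\cal M}_{L,\delta},{\bf p})=\sum_{n=1}^{L+1}(p_n+1)+O(1)\sim \mu L^2/2$, so that $L\sim\sqrt{2\N/\mu}$, turning $\exp(-b'L)$ into $\exp(-b\sqrt{\N})$; the polynomial-in-$L$ loss from the pre-factor identified at the outset is then easily absorbed. The \emph{main obstacle} will be the fine-tuning of the threshold $\mu_0$ in step (b): one has to make sure that the growth of $d^{p_n}$ and $(p_n!)^2$ from the analytic regularity, together with the algebraic factor $t_n^{2\sigma+\alpha-(2p_n+2+\alpha)/\gamma}$-type loss coming from the geometric mesh, are all dominated by the Stirling decay of $\Gamma(2p_n+1)$ and by the geometric scaling $k_n^{2p_n}$; this forces the explicit dependence $\mu_0=\mu_0(\delta,d,\sigma)$ and parallels the estimates in~\cite[Theorem 4.2]{MustaphaBrunnerMustaphaSchoetzau2011}.
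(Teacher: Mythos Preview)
Your overall architecture matches the paper's proof: apply Theorem~\ref{thm:error-bound-abstract}, split the right-hand side over (a) $I_1$, (b) the geometric layers $I_2,\dots,I_{L+1}$, and (c) the coarse intervals, and then convert $\exp(-cL)$ into $\exp(-b\sqrt{\N})$ via $\N\sim CL^2$. Parts (a), (c) and the handling of the pre-factor are fine.

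The gap is in step~(b), specifically the choice $q_n=p_n$ in Theorem~\ref{thm:eta-approx}. With $q_n=p_n$ one has $\Gamma_{p_n,p_n}=1/(2p_n)!$, and after inserting the analytic regularity~\eqref{eq:countable-regularity} and $k_n=\lambda t_{n-1}$ the $p_n$-dependent part of the bound is, up to polynomial factors,
\[
\Bigl(\tfrac{\lambda d}{2}\Bigr)^{2p_n}\frac{(p_n!)^2}{(2p_n)!}\;\sim\;\Bigl(\tfrac{\lambda d}{4}\Bigr)^{2p_n}.
\]
Hence your $\kappa(\delta,d,\mu)$ is essentially $(\lambda d/4)^{2\mu}\,\delta^{-(2\sigma+\alpha)}$. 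If $\lambda d<4$ this can indeed be made $<1$ by taking $\mu$ large, but if $\lambda d\ge 4$ (equivalently $(1-\delta)d\ge 4\delta$, which is not excluded by the hypotheses) the factor is $\ge 1$ and increasing $\mu$ makes it \emph{larger}, not smaller. So the claim ``$\kappa$ can be made strictly less than one by taking $\mu\ge\mu_0$'' fails in general, and the argument does not prove the theorem as stated.

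The remedy, which is what the paper does and what the references you cite actually carry out, is to keep $q_j$ as a free parameter $q_j=c_jp_j$ with $c_j\in(0,1)$ and optimise over~$c_j$. One obtains
\[
\Gamma_{p_j,q_j}\Bigl(\tfrac{\lambda d}{2}\Bigr)^{2q_j}\Gamma(q_j{+}1)^2
\;\le\; C p_j\Bigl[\bigl(\tfrac{\lambda d c_j}{2}\bigr)^{2c_j}\tfrac{(1-c_j)^{1-c_j}}{(1+c_j)^{1+c_j}}\Bigr]^{p_j},
\]
and the bracket, minimised at $c_{\min}=1/\sqrt{1+(\lambda d/2)^2}$, is strictly less than $1$ for \emph{every} $\lambda d>0$. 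Only then can one choose $\mu_0$ so that $(\ell_{\lambda,d}(c_{\min}))^{\mu_0}=\delta^{2\sigma+\alpha}$ and conclude the uniform $\exp(-cL)$ decay. In short: replace ``$q_n=p_n$'' by ``$q_n=c_{\min}p_n$'' and the rest of your plan goes through.
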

\begin{proof}
From the geometric mesh assumptions \eqref{eq:geometric-grading}--\eqref{eq:lambda}, we notice that $t_i^{-1} k_i=1-\delta$ for $1\le i\le L+1$.
Hence, using  Theorem~\ref{thm:error-bound-abstract} and obtain
\begin{equation}
\label{eq:exp-lbl10}   \|U-u\|^2_{L_\infty(L_2)}\le C p_{L+1}^2\max\{1/(1-\delta),{\bf K}\} (E_{1}+E_2),
\end{equation}
where
\[\begin{split}
%E_{1} &= \sum_{j=1}^{L+1}\int_{t_{j-1}}^{t_j}\,\|\Pi u-u\|_1^2\,dt,\\
 E_1&= k_1^{\alpha}  \left(\int_{I_1}\|\eta'\|_1dt\right)^2+\max_{i=2}^{L+1}k_i^{\alpha+1}\int_{t_{i-1}}^{t_i}\| \eta'\|_1^2 dt
%E_2 &= \int_{T_1}^T\,(\|\Pi u-u\|_1^2\,dt,\\
\\
 E_2&= \max_{i=2}^{K}(T_i-T_{i-1})^{\alpha+1} \,\int_{T_{i-1}}^{T_i}\| \eta'\|_1^2dt\,.
\end{split}
\] Since the solution $u$ is analytic on the coarse elements ${\mathfrak J}_i$, $2\leq i\leq {\bf K}$, from Theorem~\ref{thm:eta-approx} and the
approximation results for analytic functions in~\cite[Theorem~3.19]{Schwab98} yields an error estimate of the form
\begin{equation}
\label{eq:exp-lbl20} E_2 \leq C_1\, {\rm exp}(-b_1 L).
\end{equation}
On the first subinterval $I_1$ adjacent to $t=0$, $\Pi u \in \Poly_1$. Hence, we
 follow the steps in \eqref{eq:exp-I1-L2 new new}, and then  using the regularity assumption (\ref{eq:countable-regularity}) and the geometric mesh properties, \eqref{eq:geometric-grading},
\begin{equation}
\label{eq:exp-I1-L2 new}k_1^{\alpha}\left(\int_0^{t_1}\|\eta'\|_1dt\right)^2 \le C\,k_1^{\alpha} \Bigl(\int_0^{t_1} t^{\sigma-1}\,dt\Bigr)^2
     =  C\frac{k_1^{2\sigma+\alpha}}{\sigma}  \le C_2 {\rm exp}(-b_2 L).
\end{equation}
On the subintervals $I_j$ for $2\le j\le L+1$,
 from  the regularity property
(\ref{eq:countable-regularity}), we readily conclude that
 \begin{align*}
 \int_{t_{j-1}}^{t_j} \|u^{(q_j+1)}\|_1^2dt
 &\le C \,d^{2q_j}\Gamma(q_j+1)^2
\int_{t_{j-1}}^{t_j}\,t^{2(\sigma-1-q_j)}\,dt\\
 &\le C \,k_j\,d^{2q_j}\Gamma(q_j+1)^2
    t_{j-1}^{2(\sigma-1-q_j)},
\end{align*}
and hence, we use Theorem~\ref{thm:eta-approx} and the equality $k_j=\lambda\,t_{j-1}$ with $t_{j-1}\le \delta^{L+2-j}T_1$
(from~\eqref{eq:lambda} and \eqref{eq:geometric-grading}), and  get
 \begin{equation*}
 \begin{split}
k_j^{\alpha+1} \int_{t_{j-1}}^{t_j}\|\eta'\|_1^2dt &  \le C  \,\Gamma_{p_j, q_j}\,p_j^2\left(\frac{k_j}{2}\right)^{2(q_j+1)+\alpha}d^{2q_j}\Gamma(q_j+1)^2
    t_{j-1}^{2(\sigma-1-q_j)}\\
     & \le C \,\Gamma_{p_j, q_j}\,p_j^2\,
\left(\frac{d\lambda}{2}\right)^{2q_j}\Gamma(q_j+1)^2
   \delta^{(2\sigma+\alpha)(L+2-j)}\,.
\end{split}
\end{equation*}
%\begin{equation*}
%\begin{split}
% k_j^{\alpha+1}  \|(\Pi u-u)'\|_{L_2(I_j;H^1)}^2 &  \le C \,\Gamma_{p_j, q_j}\,p_j^2\,
%\left(\frac{d\lambda}{2}\right)^{2(q_j+1)+\alpha}\Gamma(q_j+1)^2
%   t_{j-1}^{2\sigma+\alpha}\\
%    & \le C \,\Gamma_{p_j, q_j}\,p_j^2\,
%\left(\frac{d\lambda}{2}\right)^{2q_j}\Gamma(q_j+1)^2
%   \delta^{(2\sigma+\alpha)(L+2-j)}.
%\end{split}
%\end{equation*}

Using interpolation arguments analogous to~\cite[Lemma~3.39]{Schwab98}, it can be seen that the above inequality also holds for any non-integer
regularity parameter $q_j$ with $0\leq q_j\leq p_j$. Thus, we take $q_j=c_jp_j$ with $c_j\in(0,1)$ and proceed as in~\cite[Theorem
3.36]{Schwab98},  and obtain
\begin{equation*}
\Gamma_{p_j, q_j}\,\left(\frac{d\lambda}{2}\right)^{2q_j}\Gamma(q_j+1)^2 \le C p_j\left(\left(\frac{\lambda d
c_j}{2}\right)^{2c_j}\frac{(1-c_j)^{1-c_j}}{(1+c_j)^{1+c_j}}\right)^{p_j}.
\end{equation*}
Noting that
\[
\underset{0<c_j<1}{\inf}\left(\frac{\lambda\,dc_j}{2}\right)^{2c_j}\frac{(1-c_j)^{1-c_j}}{(1+c_j)^{1+c_j}}
 =:\ell_{\lambda,d}(c_{\min})<1\quad {\rm with}~~ c_{\min}=\frac{1}{\sqrt{1+(\lambda d/2)^2}}, \]
and consequently, choosing  $c_j=c_{\min}$ and  $p_j=\lfloor \mu j \rfloor\geq \mu_0 j$ with $\mu_0>0$ such that
$\left(\ell_{\lambda,d}(c_{\min})\right)^{\mu_0}=\delta^{(2\sigma+\alpha)},$  we conclude that
\begin{equation}
\label{eq:exp-In-L2}  \begin{aligned}
 k_j^{\alpha+1} \int_{t_{j-1}}^{t_j}\|\eta'\|_1^2dt  &\le C\delta^{(2\sigma+\alpha) L}p_{L+1}^3\,
(\ell_{\lambda,d}(c_{\min}))^{p_j}
   \delta^{-(2\sigma+\alpha) j }\\
   &\le  C\delta^{(2\sigma+\alpha) L}p_{L+1}^3\leq C_3 {\rm exp}(-b_3
L).\end{aligned}
\end{equation}
where we have absorbed the term $p_{L+1}^3$ into the constants~$C_3$ and~$b_3$.

Finally, by referring to~\eqref{eq:exp-lbl10},  \eqref{eq:exp-lbl20}, \eqref{eq:exp-I1-L2 new} and \eqref{eq:exp-In-L2} yields \[
\|U-u\|^2_{L_\infty(L_2)}\le C_4 \,{\rm exp}(-b_4 L), \]where we have absorbed the term $p_{L+1}^2\max\{1/(1-\delta),{\bf K}\}$ in \eqref{eq:exp-lbl10}
into the constant $C_4$ and $b_4$. Finally, since $\N=\dim(\W({\cal M}_{L,\delta},{\bf p})) \le CL^2$ for $L$ sufficiently large, we obtain the
desired result.$\quad \Box$
\end{proof}

%%%%%%%%%%%%%%%%%%%%%%%%%%%%%%%%%%%%%%%%%%%%%%%%%%%%%%%%%%%%%%%%%%%%%
%\setcounter{equation}{0} \setcounter{theorem}{0}
\section{Numerical results}
\label{sec:numerics}
 %%%%%%%%%%%%%%%%%%%%%%%%%%%%%%%%%%%%%%%%%%%%%%%%%%%%%%%%%%%%%%%%%%5
 In this section, we demonstrate the validity of the achieved error estimates for both the h-DG  and hp-DG
  time-stepping schemes,
  for problems of the form  \eqref{eq:ivp} when  $Au=-u_{xx}$ and   $\Omega=(0,1)$. To compute  our numerical solution,
 we  discretize in space using   the standard FEs. So, we construct a family of uniform partitions of the domain~$\Omega$
into subintervals with step size $h$, and let $S_h\subset H_0^1(\Omega)$ denote the space of continuous, piecewise polynomial functions of
degree $\le r$ with $r \ge 1$. The discontinuous finite element space~\eqref{eq:FE-space} is now modified to the fully discrete finite
dimensional space
\begin{equation}
\label{eq:trialspace} \W({\cal M},{\bf p},S_h)=\left\{\,U_h:[0,T]\to S_h\,:\, U_h|_{I_n}\in \Poly_{p_n}(S_h), \ 1\leq n\leq N\,\right\}
\end{equation}
where by $\Poly_{p}(S_h)$ we denote the space of polynomials of degree~$\le p$ in the time variable with coefficients in~$S_h$.

We define our fully-discrete time-stepping DG-spatial FE scheme as follows: find $U_h\in\W({\cal M},{\bf p},S_h)$ such that
\begin{equation}\label{eq: GN Uh}
\begin{aligned}
G_N(U_h,X)&=\iprod{R_h u_0,X^0_+}+\int_0^{t_N}\,\iprod{f(t),X(t)}\,dt
    \qquad\forall\, X\in\W({\cal M},{\bf p},S_h)
\end{aligned}
\end{equation}
 where $G_N$ is the global bilinear form defined as in~\eqref{eq: GN def} and  $R_h:H_0^1(\Omega)\to S_h$ is the Ritz projection given by
  $A(R_hv,\chi)=A(v,\chi)$ for all $\chi\in S_h$\,.

To demonstrate the validity of the algebraic and exponential  convergence results of Theorems \ref{thm: ||U-u||} and
~\ref{thm:exponential-convergence} for the fully discrete version scheme, we choose $h$ (the spatial step size) and $r$ (the degree of the
approximate FE solution in the spatial variable) so that the temporal errors are dominating. To evaluate the errors, we introduce the finer grid
\begin{equation}\label{eq: fine grid}
\G^{m}=\{\,t_{j-1}+ n k_j/m\,:\, 1\leq j \leq N,\ 0 \leq n\leq m\,\}
\end{equation}
($N$ is the number of time mesh subintervals). Thus, for large values of~$m$, the error measure $ |||v|||_{m}:=\max_{t\in\G^{m}}\|v(t)\|$
approximate  the norm $\|v\|_{L_\infty(L_2)}$. To compute the spatial $L_2$-norm, we apply a composite Gauss quadrature rule with $(r+1)$ points
on each interval of the finest spatial mesh.

{\bf Example:} We choose the initial datum such that the exact solution is:
\begin{equation}\label{eq: num ex1}
 u(x,t) = {\sin}(\pi x)-t^{\alpha+2}{\sin}(2\pi x).
\end{equation}
%%%%%%%%%%%%%%%%%%%%%%%%%%%%%%%%%%%%%%%%%%%%%%%%%%%%%%%%%%%%%%%%%%%%
\begin{table}[htb]
\renewcommand{\arraystretch}{1.0}
\begin{center}
\begin{tabular}{|r|rr|rr|rr|rr|r|}
\hline $N$ &\multicolumn{2}{c|}{$\gamma=1$} &\multicolumn{2}{c|}{$\gamma=1.3$}&\multicolumn{2}{c|}{$\gamma=1.6$}&\multicolumn{2}{c|}{}
&\multicolumn{1}{c|}{}
\\ \hline
 18& 8.32e-04&     & 2.78e-04&     & 1.93e-04&     & & &\\
 27& 4.80e-04& 1.35& 1.36e-04& 1.76& 8.28e-05& 2.08& & &$p=1$\\
 36& 3.27e-04& 1.34& 8.28e-05& 1.73& 4.59e-05& 2.05& & &\\
 72& 1.31e-04& 1.32& 2.53e-05& 1.71& 1.12e-05& 2.03& & &\\
 \hline
% Theory&     & 0.95& 2.53e-05& 1.71& 1.12e-05& 2.03& & &\\
 \hline $N$ &\multicolumn{2}{c|}{$\gamma=1$} &\multicolumn{2}{c|}{$\gamma=1.6$}&\multicolumn{2}{c|}{$\gamma=2.3$}&\multicolumn{2}{c|}{$\gamma=3$}&\multicolumn{1}{c|}{}
\\ \hline
% 9& 2.86e-04&     & 5.65e-05&      & 2.49e-05&     & & \\
%18& 1.07e-04& 1.41& 1.18e-05&  2.26& 2.64e-06& 3.24& &\\
18& 1.07e-04&     & 1.18e-05&      & 2.64e-06&     & & &\\
27& 6.18e-05& 1.36& 4.87e-06&  2.18& 7.43e-07& 3.12& & &$p=2$\\
36& 4.20e-05& 1.34& 2.62e-06&  2.15& 3.06e-07& 3.08& & &\\
72& 1.67e-05& 1.33& 6.06e-07&  2.11& 4.13e-08& 2.89& & &\\
  \hline
%  \hline $N$ &\multicolumn{2}{c|}{$\gamma=1$}
%&\multicolumn{2}{c|}{$\gamma=1.6$}&\multicolumn{2}{c|}{$\gamma=2.3$}&\multicolumn{2}{c|}{$\gamma=3$}&\multicolumn{1}{c|}{}\\
\hline
  9 & 1.01e-04&     & 2.00e-05&     & 3.65e-06&     & 2.42e-06&     &\\
 18 & 3.81e-05& 1.41& 4.18e-06& 2.26& 3.87e-07& 3.23& 1.30e-07& 4.22&$p=3$\\
 27 & 2.19e-05& 1.36& 1.72e-06& 2.18& 1.10e-07& 3.10& 2.79e-08& 3.80&\\
 36 & 1.49e-05& 1.34& 9.29e-07& 2.15& 4.54e-08& 3.08& 1.03e-08& 3.46&\\
\hline
  \end{tabular} \vspace{0.05in} \caption {The errors $|||U_h-u|||_{10}$ for
the $h$-DGM for different mesh gradings  with $\alpha=-0.7$. We observe convergence of order $k^{(\alpha+2)\gamma} (=k^{1.3\gamma})$ for $1\le
\gamma \le (p+1)/(\alpha+2)$ for $p=1,\,2$ with some deterioration in the convergence rates for $p=3$ and $\gamma=3$. This might be due to the
direct implementation of the discrete solution which will then cause some numerical instability in computing the integrals involved the
 memory term especially when $p\ge 3.$ Indeed, for $p=3$ and $\gamma=3$,  modifying the order of the DG solution
 on the first time subinterval $I_1$ by replacing it with a linear DG approximation (as we assumed in the theory) was beneficial.} \label{tab: ||U_h-u|| alpha=-0.7 smooth}
\end{center}
\end{table}
It can be seen that the regularity conditions~\eqref{eq:countable-regularity} and~\eqref{eq:countable-regularity v1} hold for $\sigma=\alpha+2$.
We first test the accuracy of the h-DGM  with uniform polynomial degree $p$ (in time) on the non-uniformly graded meshes ${\cal M}={\cal
M}_\gamma$ in~\eqref{eq: standard tn} for various choices of~$\gamma\ge1$ and for $\alpha=-0.7$.
 In  Table~\ref{tab: ||U_h-u|| alpha=-0.7 smooth}  we computed  the errors and the experimental rates of convergence for various
values of $\gamma$.  We observe  a uniform global error  bounded by $Ck^{\min\{\gamma(2+\alpha),p+1\}}$ for $\gamma \ge 1$ (in particular for
$p\in\{1,2\}$), which is optimal for $\gamma\ge (p+1)/(\alpha+2).$  These numerical results  illustrated more optimistic convergence rates
(faster and optimal) compared to Theorem~\ref{thm: ||U-u||},
 and also demonstrated that the grading mesh parameter $\gamma$ is slightly relaxed.  Recall that, for a strongly graded mesh, the achieved convergence rate
 in Theorem~\ref{thm: ||U-u||} is  of order
 $ O(k^{\min\{\gamma(2+\frac{3\alpha}{2}),2+\frac{\alpha}{2}\}})$ for $p=1$ and
$O(k^{\min\{3\gamma\frac{(\alpha+1)}{2},p+\frac{\alpha+1}{2}\}})$ for $p\ge 2,$ i.e., short by order $-\frac{\alpha}{2}$
 from being optimal for $p=1$ while short by order $\frac{1-\alpha}{2}$ for $p\ge 2$ (more pessimistic)

%%%%%%%%%%%%%%%%%%%%%%%%%%%%%%%%%%%%%%%%%%%%%%%%%%%%%%%%%%%%%%%%%%%%%%%%%%%%
Next, we test the performance of the $hp$-version time-stepping of the scheme \eqref{eq: GN Uh}. We use the geometrically refined time-step and
linearly increasing polynomial degrees as introduced in Section~\ref{sec:exponential} for the exact solution in~\eqref{eq: num ex1} with
$\alpha=-0.7$. We choose $T_1=1$ and $\mu=1$. We notice that the analytic regularity property  \eqref{eq:countable-regularity} holds for
$\sigma=\alpha+2$ and hence,  in accordance with Theorem~\ref{thm:exponential-convergence},  we expect the error to converge exponentially
(${\rm exp}(-b  \sqrt{\N})$ with $\N=\dim(\W({\cal M}_{L,\delta},{\bf p}))$). We calculate the coefficient $b$ in the exponent using the
formula:
\begin{equation}\label{eq: OC hp}
{\rm log}({\rm error}(\N_{L-1})/{\rm error}(\N_L))/({\sqrt{\N_L}-\sqrt{\N_{L-1}}}),
\end{equation}
where $\N_L=\dim(\W({\cal M}_{L,\delta},{\bf p}))$ and ${\rm error}(\N_L)$ is the  error in $L_\infty(0,T)$ corresponding to the geometric time
mesh \eqref{eq:geometric-grading} (with $T_1=T=1$) which consists of $L+1$ subintervals. The numerical values of~$b$ are approximately the same
(as it should be) for different values of geometric gradings~$L$.  This is illustrated tabularly in Table~\ref{tab: hp error v2 smooth} where it
can be seen that $\delta=0.24$, the $hp$-version gives an $L_\infty$-error smaller than $e^{-07}$ with less than $44$ degrees of freedom and $8$
time subintervals only.
 This clearly underlines the suitability of $hp$-version approaches for the numerical approximation of the fractional diffusion problem \eqref{eq:ivp}.
 We show the $hp$-errors against $\sqrt{\N}$ graphically  in Figure \ref{fig1: hp error smooth}.
 In the semi-logarithmic scale, the curves are roughly straight lines, which indicates exponential convergence rates.

%%%%%%%%%%%%%%%%%%%%%%%%%%%%%%%%%%%%%%%%%%%%%%%%%%%%%%%%%%%%%%%%%%%%%%%%%%%%%%%%%%%%%%%
\begin{table}
\renewcommand{\arraystretch}{1.0}
\begin{center}
\begin{tabular}{|r|r|rr|rr|rr|rr|}
\hline $L$&$\N(L)$ &\multicolumn{2}{c|}{$\delta=0.21$}& \multicolumn{2}{c|}{$\delta=0.24$}&\multicolumn{2}{c|}{$\delta=0.27$}
&\multicolumn{2}{c|}{$\delta=0.30$}\\
%&\multicolumn{2}{c|}{$\delta=0.33$} \\
\hline
3& 14& 1.58e-04& 2.72  &2.66e-04&  2.49& 4.20e-04& 2.29& 6.33e-04& 2.10\\%& 9.17e-04& 1.94\\
4& 20& 2.11e-05& 2.76  &4.20e-05&  2.53& 7.72e-05& 2.32& 1.33e-04& 2.13\\%& 2.18e-04& 1.97\\
5& 27& 3.76e-06& 2.38  &6.65e-06&  2.55& 1.42e-05& 2.34& 2.81e-05& 2.15\\%& 5.20e-05& 1.98\\
6& 35& 1.09e-06& 1.72  &1.06e-06&  2.55& 2.63e-06& 2.35& 5.93e-06& 2.16\\%& 1.24e-05& 1.99\\
7& 44& 1.01e-06& 0.09  &2.49e-07&  2.02& 4.86e-07& 2.35& 1.26e-06& 2.16\\%& 2.97e-06& 1.99\\
 \hline
  \end{tabular}\vspace{0.05in}
   \caption {The errors
$|||U_h-u|||_{60}$ and the calculated exponent $b$ for different choices of~$\delta$ with  $\alpha=-0.7$. We partitioned the time interval
geometrically (see \eqref{eq:geometric-grading}) into $L+1$ subintervals. } \label{tab: hp error v2 smooth}
\end{center}
\end{table}
   %%%%%%%%%%%%%%%%%%%%%%%%%%%%%%%%%%%%%%%%%%%%%%%%%%%%%%%%%%%%%%%%%%%%%
 \begin{figure}%[htb]
 \begin{center}
 \scalebox{0.4}{\includegraphics{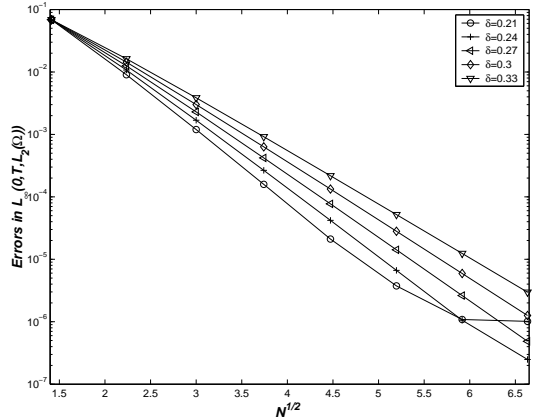}}
  \caption{The errors $|||U_h-u|||_{60}$  plotted against $\sqrt{\N}$ for different choices of~$\delta$,
 with $\alpha=-0.7$.}\label{fig1: hp error smooth}
 \end{center}
 \end{figure}
 %%%%%%%%%%%%%%%%%%%%%%%%%%%%%%%%%%%%%%%%%%%%%%%%%%%%%%%%%%%%%%%%%%%%%

 In Figure \ref{fig1: hp error smooth different delta}, for a fixed $\N=44$,   we plot
 the errors against the parameter $\delta$ for different values of $\alpha$. We observe that  values of $\delta$ in the neighborhood of the interval
$[0.2, 0.3]$ yields the best results.
 %%%%%%%%%%%%%%%%%%%%%%%%%%%%%%%%%%%%%%%%%%%%%%%%%%%%%%%%%%%%%%%%%%%%%
 \begin{figure}[htb]
 \begin{center}
 \scalebox{0.5}{\includegraphics{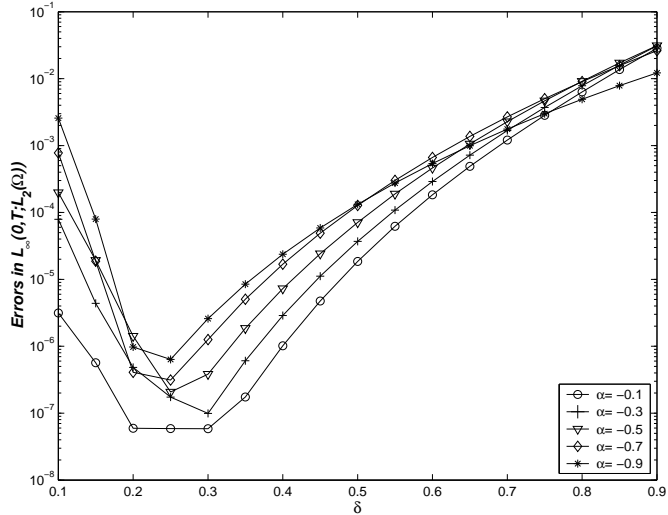}}
 \caption{The errors $|||U_h-u|||_{60}$ plotted against  $\delta$
for different values of  $\alpha$ and fixed $\N=44$.} \label{fig1: hp error smooth different delta}
 \end{center}
 \end{figure}

% {\bf Acknowledgments}
% \end{acknowledgments}
%%%%%%%%%%%%%%%%%%%%%%%%%%%%%%%%%%%%%%%%%%%%%%%%%%%%%%%%%%%%%%%%%%%%%
\bibliographystyle{plain}

\begin{thebibliography}{99}
%%%%%%%%%%%%%%%%%%%%%%%%%%%%%%%%%%%%%%%%%%%%%%%%%%%%%%%%%%%%%%%%%%%%%
\bibitem{ChenLiuTurnerAnh2010}  C-M. Chen, F. Liu, I. Turner and V. Anh, Numerical schemes and multivariate extrapolation of
            a two-dimensional anomalous sub-diffusion equation., {\em Numer. Algor.}, {54}, (2010) 1--21.

\bibitem{Cui2009} M. Cui, Compact finite difference method for
             the fractional diffusion equation, {\em J. Comput. Phys.}, 228,  (2009) 7792--7804.

\bibitem{Cui2012} {M. Cui},  Compact alternating direction implicit method for
             two-dimensional time fractional diffusion equation,  {\em J. Comput. Phys.}, {231}, (2012)  {2621--2633}.

\bibitem{Cui2013} M. Cui, Convergence analysis of high-order compact alternating direction implicit schemes for the two-dimensional
time fractional diffusion equation, {\em Numer. Algor.}, 62,  (2013) 383--409.

\bibitem{GaoSun2011} G.G. Gao and Z.Z. Sun, A box-type scheme for fractional sub-diffusion
equation with Neumann boundary conditions, {\em J. Comput. Phys.}, 230, (2011) 6061-–6074.

\bibitem{JinLazarovZhou2012} B. Jin, R. Lazarov and  Z. Zhou, Error estimates for a semidiscrete finite element method for fractional order
 parabolic equations,  {\em SIAM J. Numer. Anal.}, 51, (2013)  445–-466.

\bibitem{KilbasSrivastavaTrujillo2006} A.A. Kilbas, H.M. Srivastava and  J.J. Trujillo, Theory and Applications of Fractional Differential
Equations, Volume 204 (North-Holland Mathematics Studies), 2006

\bibitem{LanglandsHenry2005} T.~A.~M.~Langlands, B.~I.~Henry, The accuracy and stability of an implicit solution method for the fractional diffusion
 equation, {\em J. Comput. Phys.}, {205}, (2005) 719--936.

\bibitem{LiuYangBurrage2009} F.  Liu,  C. Yang and  K. Burrage, Numerical method and analytical technique of the modified anomalous subdiffusion equation with a nonlinear
source term, {\em Comput. Appl. Math.}, 231, (2009) 160-176.

\bibitem{Mathai2011} {A.~M.~Mathai, R.~ K.~Saxena and  H.~J.~Haubold}, {The H-Function: Theory and Applications}, {Springer}, {2010}.

\bibitem{McLean2010} W. Mclean, Regularity of solutions to a time-fractional diffusion
equation, {\em ANZIAM J.},  {52}, (2010) 123–-138.

\bibitem{McLean2012} W. Mclean, Fast summation by interval clustering for an evolution equation with memory,
{\em SIAM J. Sci. Comput.}, {34}, (2012)  A3039-A3056.


\bibitem{McLeanMustapha2007} W. Mclean and K. Mustapha, A second-order accurate
numerical method for a fractional wave equation, {\em Numer. Math.}, {105}, (2007)  481--510.



\bibitem{McLeanMustapha2009} W. McLean and K. Mustapha, Convergence analysis of a discontinuous Galerkin method for a
sub-diffusion equation, {\em Numer. Algo.}, {52}, (2009)  69--88.


\bibitem{MustaphaAbdallahFurati2014} K. Mustapha, B. Abdallah and K.M. Furati, A discontinuous Petrov-Galerkin method for time-fractional diffusion equations, {\em SIAM J. Numer. Anal.},  (2014), to appear.



\bibitem{Mustapha2011} K. Mustapha, An implicit finite difference time-stepping method for
            a sub-diffusion equation, with spatial discretization          by finite elements, {\em IMA J. Numer. Anal.}, {31}, (2011) 719--739.

%\bibitem{MustaphaMuttawa2012} K. Mustapha and  J. AlMutawa, A finite difference  method for an anomalous sub-diffusion equation,
%theory and applications, {\em Numer. Algor.}, (2012).

\bibitem{MustaphaBrunnerMustaphaSchoetzau2011} K. Mustapha, H. Brunner, H. Mustapha and D. Sch{\"o}tzau, An $hp$-version discontinuous Galerkin method for integro-differential equations of
parabolic type, {\em SIAM J. Numer. Anal.}, {49}, (2011) 1369--1396.

\bibitem{MustaphaMcLean2009} K. Mustapha and W. McLean,  Discontinuous Galerkin method for an evolution equation with a memory term of positive type,
 {\em Math. Comp.} {78}, (2009) 1975--1995.

\bibitem{MustaphaMcLean2011} K. Mustapha and  W. McLean, Piecewise-linear, discontinuous Galerkin method for a fractional diffusion equation,
 \textit{\em Numer. Algor.}, {56}, (2011) 159--184.

\bibitem{MustaphaMcLean2012IMA} K. Mustapha and  W. McLean,  Uniform convergence for a discontinuous Galerkin, time stepping method applied
 to a fractional diffusion equation, {\em IMA J. Numer. Anal.}, {32}, (2012) 906--925.

\bibitem{MustaphaMcLean2012SINUM} K. Mustapha and  W. McLean, Superconvergence of a discontinuous Galerkin method for fractional diffusion and wave equations,
 {\em SIAM J. Numer. Anal.}, 51, (2013) 491--515.

\bibitem{MustaphaSchoetzau2013}
K. Mustapha and D. Sch{\"o}tzau, Well-posedness of $hp-$version discontinuous Galerkin methods for fractional diffusion wave equations,
 {\em  IMA J. Numer. Anal.}, (2013), Accepted.

\bibitem{Quintana-MurilloYuste2011} J. Quintana-Murillo and S.B. Yuste, An explicit difference method for solving
fractional diffusion and diffusion-wave equations in the Caputo form, {\em  J. Comput. Nonlin. Dyn.}, 6, (2011) 021014.

\bibitem{SchoetzauSchwab00} D. Sch{\"{o}}tzau and C. Schwab, Time discretization of parabolic problems by the $hp$-version of the discontinuous
{G}alerkin finite element method, {\em SIAM J. Numer. Anal.}, {38},  (2000) 837--875.

\bibitem{Schwab98} C. Schwab, $p$ and $hp$-Finite Element Methods -- Theory and Applications in Solid and Fluid Mechanics, Oxford University Press, 1998.

\bibitem{SokolovKlafter2005} {I.~Sokolov and J.~Klafter}, {From diffusion to anomalous diffusion:
             A century after {E}instein's {B}rownian motion}, {\em Chaos}, {15}, (2005) {026103}.

\bibitem{SweilamKhaderMahdy2012} N. H. Sweilam, M. M. Khader and A. M. S. Mahdy,
Crank-Nicolson finite difference method for solving time-fractional diffusion equation, {\em J. Fract. Cal. Appl.}, 2 (2012) 1--9.

\bibitem{Tarasov2011} V. E.~Tarasov, {Fractional Dynamics: Applications of Fractional
             Calculus to Dynamics of Particles}, {Springer}, {Fields and Media (Nonlinear Physical Science)}, {2010}.

\bibitem{WangWang2011}  {H. Wang and K. Wang}, {An {$O(N\log^2N)$} alternating-direction finite difference
             method for two-dimensional fractional diffusion equations}, {\em J. Comput. Phys.}, {230}, (2011) 7830--7839.

\bibitem{YusteAcedo2005} S. B. Yuste and  L.  Acedo,  An explicit finite difference method and a new von Neumann-type stability analysis for fractional diffusion
equations, {\em  SIAM J. Numer. Anal.}, {42}, (2005) 1862--1874.

\bibitem{YusteQuintana2009} S.B. Yuste and  J. Quintana-Murillo, On three explicit difference schemes for fractional diffusion and
diffusion-wave equations, {\em Phys. Scripta T136}, (2009) 014025.

\bibitem{ZhangSun2011} {Ya-nan Zhang and Zhi-zhong Sun}, {Alternating direction implicit schemes for the two-dimensional fractional sub-diffusion equation},
   {\em J. Comput. Phys.},  {230}, (2011)   {8713--8728}.

\bibitem{ZhuangLiuAnhTurner2008}  P.~Zhuang, F.~Liu, V.~Anh and I.~Turner, {New solution and analytical techniques of the implicit
             numerical methods for the anomalous sub-diffusion equation},  {\em SIAM J.  Numer. Anal.}, {46}, (2008) {1079--1095}.

\bibitem{ZhuangLiuAnhTurner2009}  P.~Zhuang, F.~Liu,  V.~Anh and I.~Turner, {Stability and convergence of an implicit numerical method
             for the nonlinear fractional reaction-subdiffusion process},  {\em IMA J. Appl. Math.},   {74}, (2009)  {645--667}.
\end{thebibliography}

\end{document}